\author{Mirko D'Ovidio\footnote{{\em Department of Statistics, Probability and Applied Statistics. Sapienza University of Rome, P.le Aldo Moro, 5 - 00185 Rome (Italy). e-mail: mirko.dovidio@uniroma1.it}}}
\title{Explicit solutions to fractional diffusion equations via Generalized Gamma Convolution}
\newtheorem{te}{Theorem}
\newtheorem{lm}{Lemma}
\newtheorem{os}{Remark}
\newtheorem{coro}{Corollary}
\renewcommand{\H}{H^{m,n}_{p,q}\left[ x \bigg| \begin{array}{l} (a_i, \alpha_i)_{i=1, .. , p}\\ (b_j, \beta_j)_{j=1, .. , q}  \end{array} \right]}
\numberwithin{equation}{section}
\begin{document}

\maketitle

\textbf{Abstract} In this paper we deal with Mellin convolution of generalized Gamma densities which leads to integrals of modified Bessel functions of the second kind. Such convolutions allow us to explicitly write the solutions of the time-fractional diffusion equations involving the adjoint operators of a square Bessel process and a Bessel process.
\newline 

\textbf{Keywords}: Mellin convolution formula, generalized Gamma r.v.'s, Stable subordinators, Fox functions, Bessel processes, Modified Bessel functions.

\section{Introduction and main result}
In the last years, the analysis of the compositions of processes and the corresponding governing equations has received the attention of many researchers. Many of them are interested in compositions involving subordinators, in other words, subordinated processes $Y(T(t))$, $t>0$ (according to \cite{Fel71}) where $T(t)$, $t>0$ is a random time with non-negative, independent and homogeneous increments (see \cite{Btoi96}). If the random time is a (symmetric or totally skewed) stable process we have results which are strictly related to the Bochner's subordination and the p.d.e.'s connections have been investigated, e.g., in \cite{DEBL04, DO, DO2, BMN09ann, NA08}. If the random time is an inverse stable subordinator we shall refer to the governing equation of $Y(T(t))$ as a fractional equation considering that a fractional time-derivative must be taken into account. In the literature, several authors have studied the solutions to space-time fractional equations. In the papers by Wyss \cite{Wyss86}, Schneider and Wyss \cite{SWyss89}, the authors present solutions of the fractional diffusion equation $\partial_t^{\lambda} T = \partial^2_x T$ in terms of Fox's functions (see Section \ref{prelim}). In the works by Mainardi et al., see e.g. \cite{MLP01, MPG03} the authors have shown that the solutions to space-time fractional equation $_xD^{\alpha}_{\theta} u = \, _tD^{\beta}_{*} u$ can be represented by means of Mellin-Barnes integral representations (or Fox's functions) and M-Wright functions (see e.g. \citet{KST06}). The fractional Cauchy problem $D^{\alpha}_t u = L\, u$ has been thoroughly studied by yet other authors and several representations of the solutions have been carried out, but an explicit form of the solutions has never been obtained. \citet{Nig86} gave a physical derivation when $L$ is the generator of some continuous  Markov process. \citet{Zasl94} introduced the space-time fractional kinetic equation for Hamiltonian chaos. \citet{Koc89, Koc90} first introduced a mathematical approach while \citet{BM01} established the connections between fractional problem and subordination by means of inverse stable subordinator when $L$ is an infinitely divisible generator on a finite dimensional vector space. In particular, if $\partial_t p = Lp$ is the governing equation of $\mathbf{X}(t)$, then under certain conditions, $\partial^\beta_t q = Lq + \delta(\mathbf{x}) t^{-\beta}/\Gamma(1-\beta)$ is the equation governing the process $\mathbf{X}(V_t)$ where $V_t$ is the inverse or hitting time process to the $\beta$-stable subordinator, $\beta \in (0,1)$. \citet{OB04, OB09} found explicit representations of the solutions to $\partial^{\nu}_t u = \lambda^2 \partial^2_x u$ only in some particlular cases: $\nu=(1/2)^{n}$, $n \in \mathbb{N}$ and $\nu=1/3,2/3,4/3$. Also, they represented the solutions to the fractional telegraph equations in terms of stable densities, see \cite{OB03, OB04}. In general, the solutions to fractional equations represent the probability densities of certain subordinated processes obtained by using a time clock (in the following we will refer to it as  $L^{\nu}_t$) which is an inverse stable subordinator (see Section \ref{secSSub}). For a short review on this field, see also \citet{NANERW} and the references therein.\\

We will present the role of the Mellin convolution formula in finding solutions of fractional diffusion equations. In particular, our result allows us to write the distribution of both stable subordinator and its inverse process whose governing equations are respectively space-fractional or time-fractional equations. This result turns out to be useful for representing the solutions to the following fractional diffusion equation
\begin{equation}
D^{\nu}_t \,\tilde{u}^{\gamma, \mu}_{\nu} = \mathcal{G}_{\gamma, \mu}\, \tilde{u}^{\gamma, \mu}_{\nu} \label{ProbMain}
\end{equation}
where $\tilde{u}^{\gamma, \mu}_{\nu}=\tilde{u}^{\gamma, \mu}_{\nu}(x,t)$, $x>0$, $t>0$, $D^{\nu}_t$ is the Riemann-Liouville fractional derivative, $\nu \in (0,1]$ and $\mathcal{G}_{\gamma, \mu}$ is an operator to be defined below (see formula \eqref{generatorG}). We present,  for $\nu = 1/(2n+1)$, $n \in \mathbb{N} \cup \{0\}$, the explicit solutions to \eqref{ProbMain} in terms of integrals of modified Bessel functions of the second kind ($K_\nu$) whereas, for $\nu \in (0,1]$, we obtain the solutions to \eqref{ProbMain} in terms of Fox's functions. After some preliminaries in Section \ref{prelim}, in Section \ref{secConv} we recall the generalized Gamma density $Q^{\gamma}_{\mu}$ starting from which we define the distribution $g^{\gamma}_{\mu}$ of the (generalized Gamma) process $G^{\gamma, \mu}_t$ and the distribution $e^{\gamma}_{\mu}$ of the process $E^{\gamma, \mu}_t$. The latter can be seen as the reciprocal Gamma process, indeed $E^{\gamma, \mu}_t = 1/G^{\gamma, \mu}_t$, or in a more striking interpretation, as the hitting time process for which $(E^{\gamma, \mu}_t < x) = (G^{\gamma, \mu}_x > t)$. We shall refer to $E^{\gamma, \mu}_t$ as the reciprocal or equivalently the inverse process of $G^{\gamma, \mu}_t$. It must be noticed that $e^{\gamma}_{\mu} = g^{-\gamma}_{\mu}$ because  $G^{-\gamma, \mu}_t = 1/G^{\gamma, \mu}_t$. Furthermore, we introduce the most important tool we deal with in this paper, the Mellin convolutions $g^{\gamma, \star n}_{\bar{\mu}}$ (see formula \eqref{mcgW}) and $e^{\star n}_{\bar{\mu}}$ (see formula \eqref{mceW}) where $e^{\star n}_{\bar{\mu}}$ stands for $e^{1,\star n}_{\bar{\mu}}$.  In Section \ref{secSSub} we draw some useful transforms of the distribution $h_{\nu}$ of the stable subordinator $\tilde{\tau}^{\nu}_t$ and the distribution $l_{\nu}$ of the inverse process $L^{\nu}_t$. Similar calculations can be found in the paper by \citet{SWyss89}. The inverse (or hitting time) process is defined once again from the fact that $(L^{\nu}_t <x) = (\tilde{\tau}^{\nu}_{x} > t)$ (see also \cite{BM01, Btoi96}). In Section \ref{mainRes} we present our main contribution. We show that the following representations hold true:
\begin{equation*}
h_{\nu}(x,t) = e^{\star n}_{\bar{\mu}}(x, \varphi_{n+1}(t)), \quad x>0,\; t>0,\; \nu =1/(n+1), \; n \in \mathbb{N}
\end{equation*}
and
\begin{equation*}
l_{\nu}(x,t) = g^{(n+1), \star n}_{\bar{\mu}}(x, \psi_{n+1}(t)), \quad x>0, \; t>0, \; \nu=1/(n+1),\; n \in \mathbb{N}.
\end{equation*}
where $\bar{\mu}=(\mu_1, \ldots , \mu_n)$, $\mu_j=j\,\nu$, $j=1,2,\ldots, n$, $\nu=1/(n+1)$, $n \in \mathbb{N}$ and the time-stetching functions are given by $\varphi_m(s) = (s/m)^m$ and $\psi_m(s)=ms^{1/m}$, $s \in (0, \infty)$, $m \in \mathbb{N}$, $\psi=\varphi^{-1}$.\\  
The discussion made so far allows us to introduce the result stated in Theorem \ref{mainTheorem}. For $\nu = 1/(n+1)$, $n \in \mathbb{N} \cup \{0\}$, the solutions to \eqref{ProbMain} can be written as follows
\begin{equation*}
\tilde{u}^{\gamma, \mu}_{\nu}(x, t) = \int_0^\infty g^\gamma_\mu (x,s^{1/\gamma}) \; g^{1/\nu, \star (1 /\nu - 1)}_{\bar{\mu}}(s, \psi_{1/\nu}(t)) \; ds, \quad x \in (0, \infty),\; t>0
\end{equation*}
where, for $n \in 2\mathbb{N} \cup \{0\}$, we have
\begin{equation*}
g^{1/\nu, \star (1 /\nu - 1)}_{\bar{\mu}}(x, t) = \frac{1}{\nu^{1/2\nu}} \left( \frac{x}{\pi^2 t^3} \right)^\frac{1-\nu}{4\nu} \int_0^\infty \ldots \int_0^\infty \mathcal{Q}_{\frac{1-\nu}{2}}(x, s_1) \ldots \mathcal{Q}_{\frac{1-\nu}{2}}(s_{n-1}, t) ds_1 \ldots ds_{n-1}
\end{equation*}
and
\begin{equation*}
\mathcal{Q}_{\frac{1-\nu}{2}}(x,t)=K_{\frac{1-\nu}{2}} \left( 2\sqrt{ (x / t)^{1/\nu}} \right), \quad x>0,\; t>0.
\end{equation*}
As a direct consequence of this result we obtain $\tilde{u}^{\gamma, \mu}_{1} = \tilde{g}^{\gamma}_{\mu}$, for $\nu =1$,  where $\tilde{g}^{\gamma}_{\mu}(x,t) = g^{\gamma}_{\mu}(x, t^{1/\gamma})$ and the governing equation writes
\begin{equation*}
\frac{\partial}{\partial t} \tilde{u}^{\gamma, \mu}_{1} = \frac{1}{\gamma^2} \left( \frac{\partial}{\partial x} x^{2-\gamma} \frac{\partial}{\partial x} - (\gamma \mu -1) \frac{\partial}{\partial x} x^{1-\gamma} \right) \tilde{u}^{\gamma, \mu}_{1}, \quad x > 0, \; t>0.
\end{equation*}
Furthermore, for $\gamma=1,2$ and $\nu \in (0,1]$ we obtain
\begin{equation}
D^{\nu}_t \tilde{u}^{1,\mu}_{\nu} = \left( x \frac{\partial^2}{\partial x^2} - (\mu -2) \frac{\partial}{\partial x} \right) \tilde{u}^{1,\mu}_{\nu}, \quad x > 0,\; t>0, \; \mu>0
\label{p1}
\end{equation}
and
\begin{equation}
D^{\nu}_t  \tilde{u}^{2,\mu}_{\nu} = \frac{1}{2^2} \left( \frac{\partial^2}{\partial x^2} - \frac{\partial}{\partial x} \frac{(2\mu -1)}{x} \right) \tilde{u}^{2,\mu}_{\nu}, \quad x > 0,\; t>0, \; \mu>0.
\label{p2}
\end{equation}
Equation \eqref{p2} represents a fractional diffusion around spherical objects and thus, the solutions we deal with obey radial diffusion equations. 

\section{Preliminaries}
\label{prelim}
The H functions were introduced by Fox \cite{FOX61} in 1996 as a very general class of functions. For our purpose, the Fox's H functions will be introduced as the class of functions uniquely identified by their Mellin transforms. A function $f$ for which the following Mellin transform exists
\begin{equation*}
\mathcal{M}[ f(\cdot) ] (\eta) = \int_0^\infty x^{\eta} f(x) \frac{dx}{x}, \quad \Re \{ \eta \} > 0
\end{equation*}
can be written in terms of H functions by observing that
\begin{equation}
\int_{0}^{\infty} x^{\eta } H^{m,n}_{p,q}\left[ x \bigg| \begin{array}{l} (a_i, \alpha_i)_{i=1, .. , p}\\ (b_j, \beta_j)_{j=1, .. , q}  \end{array} \right] \frac{dx}{x} = \mathcal{M}^{m,n}_{p,q} (\eta), \quad \Re \{ \eta \} \in \mathcal{D}
\label{intStrip}
\end{equation}
where
\begin{equation}
\mathcal{M}^{m,n}_{p,q} (\eta)  = \frac{\prod_{j=1}^{m} \Gamma(b_j + \eta \beta_j) \prod_{i=1}^{n} \Gamma(1-a_i - \eta \alpha_i)}{\prod_{j=m+1}^{q} \Gamma(1-b_j - \eta \beta_j) \prod_{i=n+1}^p \Gamma(a_i + \eta \alpha_i)}.
\label{mellinHfox}
\end{equation}
The inverse Mellin transform is defined as
\begin{equation*}
f(x)=\frac{1}{2\pi i}\int_{ \theta -i \infty}^{\theta + i \infty} \mathcal{M}[ f(\cdot) ] (\eta) x^{-\eta} d\eta 
\end{equation*}
at all points $x$ where $f$ is continuous and for some real $\theta$. Thus, according to a standard notation, the Fox H function is defined as follows
\begin{equation*}
\H = \frac{1}{2\pi i}\int_{\mathbb{P}(\mathcal{D})} \mathcal{M}^{m,n}_{p,q}(\eta) x^{-\eta} d\eta
\end{equation*}
where $\mathbb{P}(\mathcal{D})$ is a suitable path in the complex plane $\mathbb{C}$ depending on the fundamental strip ($\mathcal{D}$) such that the integral \eqref{intStrip} converges. For an extensive discussion on this function see \citet{FOX61, MS73}. The Mellin convolution formula
\begin{equation} 
f_1 \star f_2 (x) = \int_0^\infty f_1(x/s) f_2(s) \frac{ds}{s}, \quad x>0
\label{mcFormula}
\end{equation}
turns out to be very useful later on. Formula \eqref{mcFormula} is a convolution in the sense that
\begin{equation} 
\mathcal{M} \left[ f_1 \star f_2(\cdot) \right](\eta) = \mathcal{M}\left[ f_1(\cdot)\right](\eta) \times \mathcal{M} \left[f_2(\cdot)\right](\eta) .
\label{MconvFormula}
\end{equation}
Throughout the paper we will consider the integral
\begin{equation}
f_1 \circ f_2(x,t) = \int_0^\infty f_1(x,s)f_2(s,t) ds 
\label{circConv}
\end{equation}
(for some well-defined $f_1$, $f_2$) which is not, in general, a Mellin convolution. We recall the following connections between Mellin transform and both integer and fractional order derivatives. In particular, we consider a rapidly decreasing function $f:[0, \infty) \mapsto [0,\infty)$, if there exists $a \in \mathbb{R}$ such that 
\begin{equation*}
\lim_{x \to 0^+} x^{a - k - 1} \frac{d^k}{d x^k} f(x) = 0, \quad k=0,1,\ldots , n-1, \quad n \in \mathbb{N}, \; x \in \mathbb{R}_+ 
\end{equation*}
then we have
\begin{align}
\mathcal{M} \left[ \frac{d^n}{d x^n}f(\cdot) \right] (\eta) = & (-1)^n \frac{\Gamma(\eta)}{\Gamma(\eta - n)} \mathcal{M} \left[ f(\cdot) \right] (\eta - n) \label{derMint}
\end{align}
and, for $0 < \alpha <1$
\begin{align}
\mathcal{M} \left[ \frac{d^\alpha}{d x^\alpha}f(\cdot) \right] (\eta) = & \frac{\Gamma(\eta)}{\Gamma(\eta - \alpha)}\mathcal{M}\left[ f(\cdot) \right](\eta -\alpha) \label{derMfrac}
\end{align}
(see \citet{KST06, SKM93} for details). The fractional derivative appearing in \eqref{derMfrac} must be understood as follows
\begin{equation}
\frac{d^\alpha}{d x^\alpha}f(x) = \frac{1}{\Gamma\left( n-\alpha \right)} \int_0^x (x-s)^{n-\alpha -1} \frac{d^n f}{d s^n}(s) \, ds, \quad n-1 < \alpha < n \label{Cfracder}
\end{equation}
that is the Dzerbayshan-Caputo sense. We also deal with the Riemann-Liouville fractional derivative
\begin{equation}
D^{\alpha}_x f = \frac{1}{\Gamma\left( n-\alpha \right)} \frac{d^n}{d x^n} \int_0^x (x-s)^{n-\alpha -1} f(s) \, ds, \quad n-1 < \alpha < n \label{Rfracder}
\end{equation}
and the fact that 
\begin{equation}
D^\alpha_x f = \frac{d^\alpha}{d x^\alpha} f - \sum_{k=0}^{n-1} \frac{d^k}{d x^k} f \Bigg|_{x=0^+} \frac{x^{k - \alpha}}{\Gamma(k - \alpha +1)}, \quad n-1 < \alpha < n, \label{RCfracder}
\end{equation}
see \citet{GM97} and \citet{KST06}. We refer to \citet{KST06, SKM93} for a close examination of the fractional derivatives \eqref{Cfracder} and \eqref{Rfracder}.

\section{Mellin convolution of generalized Gamma densities}
\label{secConv}
In this section we introduce and study the Mellin convolution of generalized gamma densities. In the literature, it is well-known that generalized Gamma r.v. possess density law given by
\begin{equation*}
Q^{\gamma}_{\mu}(z) = \gamma \frac{z^{\gamma \mu -1}}{\Gamma\left( \mu \right)} \exp \left\lbrace -z^\gamma \right\rbrace , \quad z>0,\, \gamma>0,\, \mu>0.
\end{equation*}
Our discussion here concerns the function 
\begin{equation}
g^{\gamma}_{\mu}(x,t) = \textrm{sign}(\gamma)\frac{1}{t} Q^{\gamma }_{\mu}\left(\frac{x}{t} \right) = | \gamma | \frac{x^{\gamma \mu -1} }{t^{\gamma \mu} \Gamma(\mu)} \exp\left\lbrace - \frac{x^\gamma}{t^\gamma} \right\rbrace , \qquad x > 0, \, t>0, \; \gamma \neq 0, \; \mu >0.
\label{dist:QQgenGamma}
\end{equation}
Let us introduce the convolution
\begin{equation}
g^{\gamma_1}_{\mu_1} \star g^{\gamma_2}_{\mu_2}(x,t) = \int_0^\infty g^{\gamma_1}_{\mu_1}(x,s) g^{ \gamma_2}_{\mu_2} (s,t) ds = \textrm{sign}(\gamma_1 \gamma_2) \frac{1}{t}\int_0^\infty Q^{\gamma_1 }_{\mu_1}(x/s) Q^{\gamma_2 }_{\mu_2}(s/t) \frac{ds}{s}
\label{melConv2}
\end{equation}
for which we have (see formula \eqref{MconvFormula})
\begin{equation}
\mathcal{M}\left[ g^{\gamma_1}_{\mu_1} \star g^{\gamma_2}_{\mu_2}(\cdot ,t) \right] (\eta) =  \mathcal{M}\left[ g^{\gamma_1}_{\mu_1}(\cdot, t^{1/2}) \right] (\eta) \times \mathcal{M}\left[ g^{\gamma_2}_{\mu_2}(\cdot ,t^{1/2}) \right] (\eta)
\label{me2con}
\end{equation}
as a straightforward calculation shows. We now introduce the generalized Gamma process (GGP in short). Roughly speaking, the function \eqref{dist:QQgenGamma} can be viewed as the distribution of a GGP $\{ G^{\gamma, \mu}_t, \, t>0 \}$ in the sense that $\forall t$ the distribution of the r.v. $G^{\gamma, \mu}_t$ is the generalized Gamma distribution \eqref{dist:QQgenGamma}. Thus, we make some abuse of language by considering a process without its covariance structure. In the literature there are several non-equivalent definitions of the distribution on $\mathbb{R}^n_+$ of Gamma distributions, see e.g. \citet{KoBa06} for a comprehensive  discussion. In Section \ref{mainRes} (Corollary \ref{coro1}) we will show that the distribution \eqref{dist:QQgenGamma} satisfies the p.d.e.
\begin{equation*}
\frac{\partial}{\partial t} \, g^{\gamma}_{\mu} = \frac{d (t^{\gamma})}{d t} \, \mathcal{G}_{\gamma, \mu}\, g^{\gamma}_{\mu}, \qquad x > 0,\, t>0
\end{equation*}
where 
\begin{equation}
\mathcal{G}_{\gamma, \mu} \, f= \frac{1}{\gamma^2} \left( \frac{\partial}{\partial x} x^{2-\gamma} \frac{\partial}{\partial x}  - (\gamma \mu -1) \frac{\partial}{\partial x} x^{1-\gamma} \right) f, \quad x > 0, \; t>0 
\label{generatorG}
\end{equation}
and $\gamma \neq 0$, $f \in D(\mathcal{G}_{\gamma, \mu})$. For $\gamma=1$,  equation \eqref{dist:QQgenGamma} becomes the distribution of a $2\mu$-dimensional squared Bessel process $\{BESSQ_{t/2}^{(2\mu)},\, t>0 \}$ and, for $\gamma=2$ we obtain the distribution of a $2\mu$-dimensional Bessel process $\{BES_{t/2}^{(2\mu)},\, t>0 \}$, both  starting from zero. Some interesting distributions can be realized through Mellin convolution of distribution $g^\gamma_\mu$. Indeed, after some algebra we arrive at
\begin{equation}
g^{\gamma}_{\mu_1} \star g^{- \gamma}_{\mu_2}(x,t) = \frac{\gamma }{B(\mu_1, \mu_2)} \frac{x^{\gamma \mu_1 -1} t^{\gamma \mu_2}}{(t^\gamma + x^\gamma)^{\mu_1 + \mu_2}}, \quad x >0, \, t>0, \, \gamma >0  \label{dist:wPP}
\end{equation}
and
\begin{equation}
g^{-\gamma}_{\mu_1} \star g^{\gamma}_{\mu_2}(x,t) = \frac{\gamma }{B(\mu_1, \mu_2)} \frac{x^{\gamma \mu_2 -1} t^{\gamma \mu_1}}{(t^\gamma + x^\gamma)^{\mu_1 + \mu_2}}, \quad x >0, \, t>0, \, \gamma >0 
\label{dist:wPPP}
\end{equation}
where $B(\cdot, \cdot)$ is the Beta function (see e.g. \citet[formula 8.384]{GR}). Moreover, in light of the Mellin convolution formula \eqref{MconvFormula}, the following holds true
\begin{equation*} 
\mathcal{M}\left[g^{\gamma}_{\mu_1} \star g^{- \gamma}_{\mu_2}(\cdot ,t) \right](\eta) = \mathcal{M} \left[ g^{-\gamma}_{\mu_2} \star g^{\gamma}_{\mu_1}(\cdot ,t) \right] (\eta). 
\end{equation*}
A further distribution arising from convolution can be presented. In particular, for $\gamma \neq 0$, we have
\begin{equation}
g^{\gamma}_{\mu_1} \star g^{\gamma}_{\mu_2} (x,t) = \frac{2| \gamma | \, \left(x^\gamma /t^\gamma \right)^\frac{\mu_1+\mu_2}{2}}{x\, \Gamma(\mu_1)\Gamma(\mu_2)} K_{\mu_2-\mu_1} \left( 2 \sqrt{\frac{x^\gamma}{t^\gamma}} \right), \quad x > 0, \; t>0  \label{eConv}
\end{equation}
which proves to be very useful further on. The function $K_{\nu}$ appearing in \eqref{eConv} is the modified Bessel function of imaginary argument (see e.g \cite[formula 8.432]{GR}). For the sake of completeness we have writen the following Mellin transforms:
\begin{equation*}
\mathcal{M}\left[ g^{\gamma}_{ \mu}(\cdot, t) \right] (\eta)= \frac{\Gamma\left(\frac{\eta -1}{\gamma} + \mu \right)}{\Gamma\left( \mu \right)}t^{\eta -1}, \quad t>0, \; \Re\{ \eta \} > 1-\gamma \mu, \; \gamma \neq 0,
\end{equation*}
and
\begin{equation}
\mathcal{M}\left[ g^{\gamma}_{ \mu}(x, \cdot) \right] (\eta) = \frac{\Gamma\left( \mu -\frac{\eta}{\gamma} \right)}{\Gamma\left(\mu\right)} x^{\eta -1}, \quad x>0, \; \Re\{ \eta \} > \gamma \mu, \; \gamma \neq 0.\label{lastM} 
\end{equation}
Formula \eqref{lastM} suggests that
\begin{equation*}
\mathcal{M}\left[ g^{\gamma_1}_{\mu_1} \star g^{\gamma_2}_{\mu_2}(x, \cdot) \right] (\eta) =  \mathcal{M}\left[ g^{\gamma_1}_{\mu_1}(x^{1/2}, \cdot) \right] (\eta) \times \mathcal{M}\left[ g^{\gamma_2}_{\mu_2}(x^{1/2}, \cdot) \right] (\eta).
\end{equation*}
For the one-dimensional GGP we are able to define the inverse generalized Gamma process $\{E_t^{\gamma, \mu}$, $t>0\}$ (IGGP in short) by means of the following relation
\[ Pr \{ E_t^{\gamma, \mu} <x \} = Pr \{ G^{\gamma, \mu}_x >t  \}. \]
The density law $e^{\gamma}_{\mu}=e^{\gamma}_{\mu}(x,t)$ of the IGGP can be carried out by observing that
\begin{equation}
e^{\gamma}_{\mu}(x,t) = Pr \{ E^{\gamma, \mu}_t \in dx \} / dx = \int_t^\infty \frac{\partial}{\partial x} g^{\gamma}_{\mu}(s, x) \, ds, \quad x>0,\, t>0 \label{derintgsign}
\end{equation}
and, making use of the Mellin transform, we obtain
\begin{align*}
\mathcal{M}\left[ e^{\gamma}_{\mu}(\cdot, t) \right](\eta) = & \int_t^\infty \mathcal{M} \left[ \frac{\partial}{\partial x} g^{\gamma}_{\mu}(s, \cdot) \right](\eta) \, ds, \quad \Re\{ \eta \} <1\\
= & \left[ \textrm{by } \eqref{derMint} \right] = - (\eta -1) \int_t^\infty \mathcal{M} \left[ g^{\gamma}_{\mu}(s, \cdot) \right](\eta -1) \, ds\\
= & \left[ \textrm{by } \eqref{lastM} \right] =  - (\eta -1) \int_t^\infty  \frac{\Gamma\left( \mu -\frac{\eta -1}{\gamma} \right)}{\Gamma\left(\mu\right)} s^{\eta -2}\, ds = \frac{\Gamma\left( \mu -\frac{\eta - 1}{\gamma} \right)}{\Gamma\left(\mu\right)} t^{\eta -1}
\end{align*}
The derivative under the integral sign in \eqref{derintgsign} is allowed from the fact that $\Xi_1(s)=\frac{\partial}{\partial x}g^{\gamma}_{\mu}(s,x) \in L^{1}(\mathbb{R}_{+})$ as a function of $s$. From \eqref{mellinHfox} and the fact that 
\begin{equation}
\H = c\; H^{m,n}_{p,q}\left[ x^c \bigg| \begin{array}{l} (a_i, c \alpha_i)_{i=1, .. , p}\\ (b_j, c \beta_j)_{j=1, .. , q}  \end{array} \right] 
\label{propH1}
\end{equation}
for all $c>0$ (see \citet{MS73}), we have that
\begin{equation}
e^{\gamma}_{\mu}(x, t) = \frac{\gamma}{x} H^{1,0}_{1,1}\left[ \frac{t^{\gamma}}{x^\gamma} \Bigg| \begin{array}{c} (\mu, 0)\\ (\mu, 1) \end{array} \right], \quad x>0,\, t>0,\, \gamma >0.  \label{distEH}
\end{equation}
By observing that $\mathcal{M}\left[ e^{\gamma}_{\mu}(\cdot, t)\right](1)=1$, we immediately verify that \eqref{distEH} integrates to unity. The density law $g^\gamma_\mu$ can be expressed in terms of H functions as well, therefore we have
\begin{equation}
g^{\gamma}_{\mu}(x, t) = \frac{\gamma}{x} H^{1,0}_{1,1}\left[ \frac{x^{\gamma}}{t^\gamma} \Bigg| \begin{array}{c} (\mu, 0)\\ (\mu, 1) \end{array} \right], \quad x>0,\, t>0,\, \gamma >0.
\label{distGH}
\end{equation}
In view of \eqref{distEH} and \eqref{distGH} we can argue that
\begin{equation*} 
E^{\gamma, \mu}_t \stackrel{law}{=} G^{-\gamma, \mu}_t \stackrel{law}{=} 1 / G^{\gamma, \mu}_t ,  \quad t>0, \, \gamma >0,\, \mu >0 
\end{equation*}
and $e^{\gamma}_{\mu}(x,t) = g^{-\gamma}_\mu(x,t)$, $\gamma>0$, $x>0$, $t>0$.

\begin{os}
\normalfont
We notice that the inverse process $\{ E^{1,1/2}_t, \, t>0 \}$ can be written as
\[ E^{1,1/2}_t = \inf\{s; \, B(s) = \sqrt{2t} \} \]
where $B$ is a standard Brownian motion. Thus, $E^{1,1/2}$ can be interpreted as the first-passage time of a standard Brownian motion through the level $\sqrt{2t}$.
\end{os}

In what follows we will consider the Mellin convolution $e^{\star n}_{\bar{\mu}}(x,t) = e_{\mu_1} \star \ldots \star e_{\mu_n}(x,t)$ (see formulae \eqref{MconvFormula} and \eqref{melConv2}) where $\bar{\mu} = (\mu_1, \ldots , \mu_n)$, $\mu_j>0$, $j=1,2,\ldots , n$ and, for the sake of simplicity, $e_{\mu}(x,t)=e^{1}_{\mu}(x, t )$. For the density law $e^{\star n}_{\bar{\mu}}(x,t)$, $x>0$, $t>0$ we have
\begin{equation}
\mathcal{M} \left[e^{\star n}_{\bar{\mu}}(\cdot, t) \right](\eta) = \prod_{j=1}^n  \mathcal{M} \left[e_{\mu_j}(\cdot, t^{1/n}) \right](\eta) = t^{\eta -1} \prod_{j=1}^n  \frac{\Gamma\left( \mu_j +1 -\eta \right)}{\Gamma\left( \mu_j \right)}
\label{mceW}
\end{equation}
with $\Re\{ \eta \} < 1$. Furthermore, for the Mellin convolution $g^{\gamma, \star n}_{\bar{\mu}}(x,t) = g^{\gamma}_{\mu_1} \star, \ldots , \star g^{\gamma}_{\mu_n}(x,t)$ we have
\begin{equation}
\mathcal{M}\left[ g^{\gamma, \star n}_{\bar{\mu}}(\cdot ,t) \right](\eta) = \prod_{j=1}^n  \mathcal{M}\left[ g^{\gamma}_{\mu_j}(\cdot ,t^{1/n}) \right](\eta) = t^{\eta -1} \prod_{j=1}^n \frac{\Gamma\left( \frac{\eta -1}{\gamma} + \mu_j \right)}{\Gamma\left( \mu_j \right)} 
\label{mcgW}
\end{equation}
with $\Re \{ \eta \} > 1 - \min_{j} \{ \mu_j \}$.
\begin{lm}
The functions $g^{\gamma}_{\mu}$ and $e^{\gamma}_{\mu}$ are commutative under $\star$-convolution. 
\label{LemmaComm}
\end{lm}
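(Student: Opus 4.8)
The statement asserts that, within each family, the two–variable $\star$-convolution is symmetric in its factors, i.e.\ $g^{\gamma}_{\mu_1}\star g^{\gamma}_{\mu_2}=g^{\gamma}_{\mu_2}\star g^{\gamma}_{\mu_1}$ and likewise for $e^{\gamma}_{\mu}$. The natural route, entirely in the spirit of the Mellin machinery set up above, is to pass to Mellin transforms, use the factorization \eqref{MconvFormula}--\eqref{me2con}, and then invert. Since $e^{\gamma}_{\mu}=g^{-\gamma}_{\mu}$, it suffices to treat $g$ while allowing $\gamma$ to have either sign; the $e$-case is then a mere specialization.

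The first step is to apply \eqref{me2con}: for fixed $t$ and $\eta$ in the appropriate strip,
\begin{equation*}
\mathcal{M}\!\left[ g^{\gamma}_{\mu_1}\star g^{\gamma}_{\mu_2}(\cdot,t) \right](\eta)
= \mathcal{M}\!\left[ g^{\gamma}_{\mu_1}(\cdot,t^{1/2}) \right](\eta)\;\mathcal{M}\!\left[ g^{\gamma}_{\mu_2}(\cdot,t^{1/2}) \right](\eta).
\end{equation*}
By \eqref{mcgW} the right-hand side equals $t^{\eta-1}\,\Gamma(\tfrac{\eta-1}{\gamma}+\mu_1)\Gamma(\tfrac{\eta-1}{\gamma}+\mu_2)/[\Gamma(\mu_1)\Gamma(\mu_2)]$, which is a product of scalar-valued functions of $\eta$ and is therefore manifestly symmetric in $\mu_1$ and $\mu_2$. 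Reading \eqref{me2con} in the opposite order, the interchanged product is exactly $\mathcal{M}[ g^{\gamma}_{\mu_2}\star g^{\gamma}_{\mu_1}(\cdot,t)](\eta)$, so the two orderings have identical Mellin transforms; injectivity of the Mellin transform on its fundamental strip then forces the functions themselves to coincide. The same computation with \eqref{mceW} handles the $e$-family (or one simply invokes $e^{\gamma}_{\mu}=g^{-\gamma}_{\mu}$). As an elementary cross-check that explains \emph{why} this holds, the substitution $s=ut$ in \eqref{melConv2} gives $g^{\gamma_1}_{\mu_1}\star g^{\gamma_2}_{\mu_2}(x,t)=\mathrm{sign}(\gamma_1\gamma_2)\,t^{-1}\,(Q^{\gamma_1}_{\mu_1}\star Q^{\gamma_2}_{\mu_2})(x/t)$, with $\star$ on the right the one-variable Mellin convolution \eqref{mcFormula}, which is commutative by the change of variables $u\mapsto (x/t)/u$ inside the defining integral.

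The one genuine point to verify—and the only real obstacle—is that the two orderings are being compared on a \emph{common} strip of holomorphy, so that uniqueness of the inverse Mellin transform legitimately applies. This is where the explicit transforms are used: by \eqref{mcgW} the product lives in the half-plane $\Re\{\eta\}>1-\min\{\mu_1,\mu_2\}$ for the $g$-case, and by \eqref{mceW} in $\Re\{\eta\}<1$ for the $e$-case. Both bounds are symmetric under $\mu_1\leftrightarrow\mu_2$, so the fundamental strip is independent of the order in which the convolution is carried out and the inversion is unambiguous. With this observed, the argument closes and all remaining manipulations are routine.
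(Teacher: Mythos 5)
Your proposal is correct and rests on the same mechanism as the paper's proof: the $\star$-convolution becomes an ordinary product of Gamma-factors under the Mellin transform (the paper phrases this as permutation-invariance of the moments $E\{\cdot\}^{\eta-1}$ of the composed processes, which is the Mellin transform in disguise), and commutativity follows by inversion. Your version is somewhat more careful than the paper's, since you explicitly check that the two orderings share a common fundamental strip before invoking uniqueness of the inverse transform.
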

\begin{proof}
Consider the Mellin convolution \eqref{mceW}. Let $e_{\mu_j}$ be the distribution of the process $X^{\sigma_j}$, then formula \eqref{mceW} means that
\begin{equation*}
E\left\lbrace X^{\sigma_1}(X^{\sigma_2}(\ldots X^{\sigma_n}(t) \ldots)) \right\rbrace^{\eta -1} = E\left\lbrace X^{\sigma_1}(t^{1/n}) X^{\sigma_2}(t^{1/n}) \cdot \cdot \cdot X^{\sigma_n}(t^{1/n}) \right\rbrace^{\eta -1}   
\end{equation*}
for all possible permutations of $\{\sigma_j \}$, $j=1,2,\ldots , n$. The same result can be shown for $e^{\gamma}_{\mu_j}$. Suppose now that the process $X^{\sigma_j}$ possesses distribution $g^{\gamma}_{\mu_j}$, from \eqref{mcgW} we obtain the claimed result.
\end{proof}

\section{Stable subordinators}
\label{secSSub}
The $\nu$-stable subordinators $\{\tilde{\tau}^{(\nu)}_t$, $t>0\}$, $\nu \in (0,1)$, are defined as non-decreasing, (totally) positively skewed, L\'evy processes with  Laplace transform
\begin{equation} 
E \exp \{ - \lambda \tilde{\tau}^{(\nu)}_t \} = \exp\left\lbrace - t \lambda^{\nu} \right\rbrace , \quad t > 0, \quad \lambda >0
\label{transf:LapQ}
\end{equation}
and characteristic function
\begin{align}
E \exp\{ i \xi \tilde{\tau}^{(\nu)}_t \}  = &  \exp \{- t \Psi_{\nu}(\xi) \}, \quad \xi \in \mathbb{R}
\label{charFunSub}
\end{align}
where
\begin{equation*}
\Psi_{\nu} (\xi) = \int_0^\infty (1-e^{-i\xi u}) \frac{\nu}{\Gamma\left(1- \nu \right)} \frac{du}{u^{\nu +1}}
\end{equation*}
(see \citet{Btoi96, Zol86}). After some algebra we get
\begin{align*}
\Psi_{\nu} (\xi) = & \sigma |\xi |^{\nu} \left( 1 - i \, \textrm{sgn}(\xi) \tan\left( \frac{\pi \nu}{2} \right) \right) = |\xi |^{\nu} \exp\left\lbrace  -i \, \frac{\pi \nu}{2} \frac{\xi}{|\xi |}\right\rbrace .
\end{align*}
For the density law of the $\nu$-stable subordinator $\{ \tilde{\tau}^{(\nu)}_t,\, t>0\}$, say $h_{\nu}=h_\nu(x,t)$, $x>0$, $t>0$ we have the $t$-Mellin transforms 
\begin{equation}
\mathcal{M} \left[ \hat{h}_{\nu}(\xi, \cdot) \right](\eta) = |\xi |^{-\eta \nu} \exp\left\lbrace i \frac{\pi \eta \nu}{2} \frac{\xi}{|\xi |} \right\rbrace \Gamma\left( \eta \right) \label{MFt}
\end{equation}
and
\begin{equation}
\mathcal{M}\left[ \tilde{h}_{\nu}(\lambda, \cdot) \right](\eta) = \lambda^{-\eta \nu} \Gamma\left( \eta \right) \label{MLt}
\end{equation}
where $\hat{h}_\nu (\xi, t)= \mathcal{F}\left[h_{\nu}(\cdot, t) \right](\xi)$ is the Fourier transform appearing in \eqref{charFunSub} and $\tilde{h}_{\nu}(\lambda, t)=\mathcal{L}\left[ h_{\nu}(\cdot, t)\right](\lambda)$ is the Laplace transform \eqref{transf:LapQ}. By inverting \eqref{MFt} we obtain the Mellin transform with respect to $t$ of the density $h_{\nu}$ which reads
\begin{align}
\mathcal{M}\left[h_{\nu}(x, \cdot) \right] (\eta) =& \frac{1}{2\pi}\int_{\mathbb{R}} e^{-i\xi x} \mathcal{M} \left[ \hat{h}(\xi, \cdot) \right](\eta) d\xi \label{tmSub}\\
= & \frac{\Gamma\left(\eta \right) \Gamma\left( 1-\eta \nu \right)}{2 \pi} \left\lbrace \frac{e^{i\frac{\pi \eta \nu}{2}}}{(ix)^{1-\eta \nu}} + \frac{e^{-i \frac{\pi \eta \nu}{2}}}{(-i x)^{1-\eta \nu}}  \right\rbrace \nonumber \\
= & \frac{\Gamma\left(\eta \right) \Gamma\left( 1-\eta \nu \right)}{2 \pi \, x^{1-\eta \nu}} \left\lbrace \exp\left\lbrace  -i \frac{\pi}{2} + i \pi \eta \nu \right\rbrace  + \exp\left\lbrace i \frac{\pi}{2} - i \pi \eta \nu \right\rbrace  \right\rbrace \nonumber \\
= & \frac{\Gamma\left(\eta \right) \Gamma\left( 1-\eta \nu \right)}{\pi \, x^{1-\eta \nu}} \sin \pi \eta \nu =  \frac{\Gamma\left(\eta \right)}{\Gamma\left( \eta \nu \right)} x^{\eta \nu -1}, \quad x>0, \; \nu \in (0,1)\nonumber
\end{align}
where $\Re\{\eta \nu\} \in  (0,1)$. Formula \eqref{tmSub} can be also obtained by inverting \eqref{MLt}. We are also able to evaluate the Mellin transform with respect to $x$ of the density law $h_{\nu}$. From \eqref{MFt} and the fact that
\begin{equation}
\int_0^\infty x^{\eta -1} e^{-i \xi x} dx =  \frac{\Gamma(\eta)}{(i\xi)^\eta}, \quad \textrm{where} \quad (\pm i \xi)^\nu= |\xi |^\nu \exp\left\lbrace \pm i \frac{\nu \pi}{2} \frac{\xi}{|\xi |} \right\rbrace , \quad \nu \in (0,1) 
\label{mePPO}
\end{equation}
we obtain
\begin{align}
\mathcal{M}\left[ h_{\nu}(\cdot, t) \right](\eta) = & \frac{\Gamma\left( \eta \right)}{2\pi} \int_\mathbb{R} | \xi |^{-\eta} \exp\left\lbrace -i \frac{\pi \eta}{2} \frac{\xi}{|\xi |} - t \Psi_{\nu}(\xi)  \right\rbrace d\xi \nonumber \\
= & \frac{\Gamma(\eta)}{2\pi} \left\lbrace e^{-i \frac{\pi \eta}{2}} \int_0^\infty \xi^{-\eta} e^{-t \Phi_{\nu}(\xi)} d\xi + e^{i \frac{\pi \eta}{2}} \int_0^\infty \xi^{-\eta} e^{-t \Phi_{\nu}(-\xi)} d\xi \right\rbrace \nonumber \\
= & \frac{\Gamma(\eta)}{2\pi \nu} \Gamma\left( \frac{1-\eta}{\nu} \right) t^\frac{\eta -1}{\nu} \left\lbrace e^{i \pi (1-\eta)} + e^{-i \pi (1 -\eta)} \right\rbrace  \nonumber \\
= & \Gamma\left( \frac{1-\eta}{\nu} \right) \frac{t^\frac{\eta -1}{\nu}}{\nu \,\Gamma\left(1-\eta \right)}, \quad \Re\{ \eta \} \in (0,1), \, t>0. \label{melSsub}
\end{align}
The inversion of Fourier and Laplace transforms by making use of Mellin transform has been also treated by \citet{SWyss89}. \\

We investigate the relationship between stable subordinators and their inverse processes. For a $\nu$-stable subordinator $\{ \tilde{\tau}_t^{(\nu)}, \, t>0\}$ and an inverse process  $\{ L_t^{(\nu)}, \, t>0 \}$ (ISP in short) such that
\begin{equation*} 
Pr \{ L_t^{(\nu)} < x \} = Pr \{ \tilde{\tau}_x^{(\nu)} > t \}
\end{equation*}
we have the following relationship between density laws
\begin{equation}
l_{\nu}(x,t) =  Pr \{ L_t^{(\nu)} \in dx \} / dx = \int_t^\infty \frac{\partial}{\partial x} h_{\nu}(s, x)ds, \qquad x>0,\, t>0. \label{dist:lnu}
\end{equation}
We observe that $\frac{\partial}{\partial x} h_{\nu}(s, x)$ exists and there exists $\zeta(s) \in L^{1}(\mathbb{R}_{+})$ such that $\Xi_2(s) = \frac{\partial}{\partial x} h_{\nu}(s, x) = const\, \cdot D^{\nu}_s h_{\nu}(s,x) \leq \zeta(s)$. The function $h_{\nu}$ is the distribution of a totally skewed stable process, thus $h_{\nu}(\mathbf{x})$, $\mathbf{x} \in \mathbb{R}^{n}_{+}$ belongs to the space of functions in  $D((-\triangle)^{\nu/2})$, see \citet{SKM93}. Thus, the integral in \eqref{dist:lnu} converges. The density law \eqref{dist:lnu} can be written in terms of Fox functions by observing that
\begin{align}
\mathcal{M}\left[ l_{\nu}(\cdot, t) \right] (\eta) = & \int_t^\infty \mathcal{M}\left[\frac{\partial}{\partial x} h_{\nu}(s, \cdot) \right](\eta)\, ds  \nonumber \\
= & \left[ \textrm{by } \eqref{derMint} \right] = - (\eta -1) \int_t^\infty \mathcal{M}\left[ h_{\nu}(s, \cdot) \right](\eta -1) \, ds  \nonumber \\
= & \left[ \textrm{by } \eqref{tmSub} \right] = - \int_t^\infty \frac{\Gamma\left(\eta \right)}{\Gamma\left( \eta \nu -\nu \right)} s^{\eta \nu - \nu -1} ds\nonumber \\
= & \frac{\Gamma\left(\eta \right)}{\Gamma\left( \eta \nu - \nu +1\right)} t^{\nu (\eta-1)}, \quad \Re\{ \eta \} < 1/\nu,\, t>0. \label{xMSub}
\end{align}
Thus, by direct inspection of \eqref{mellinHfox}, we recognize that
\begin{equation}
l_{\nu}(x,t) = \frac{1}{t^\nu}H^{1,0}_{1,1}\left[ \frac{x}{t^\nu} \Bigg| \begin{array}{c} (1-\nu, \nu)\\ (0,1) \end{array} \right], \qquad x>0,\, t>0, \; \nu (0,1).
\label{dist:lH}
\end{equation}
Density \eqref{dist:lH} integrates to unity, indeed $\mathcal{M}\left[ l_{\nu}(\cdot, t) \right] (1) = 1$. The $t$-Laplace transform 
\begin{equation} 
\mathcal{L}[l_{\nu}(x, \cdot)](\lambda) = \lambda^{\nu-1} \exp\left\lbrace -x \lambda^\nu\right\rbrace , \qquad \lambda >0, \, \nu \in (0,1) \label{tranfLl}
\end{equation}
comes directly from the fact that
\begin{equation*}
\int_0^\infty e^{-\lambda t} \mathcal{M}\left[l_{\nu}(\cdot, t) \right](\eta) \, dt = \frac{\Gamma\left( \eta \right)}{\lambda^{\eta \nu -\nu +1}} =  \int_0^\infty x^{\eta -1} \mathcal{L}\left[ l_{\nu}(x, \cdot) \right](\lambda) \, dx .
\end{equation*}
From \eqref{tranfLl} we retrieve the well-known fact that $\mathcal{L}\left[ l_{\nu}(\cdot , t) \right](\lambda) = E_{\nu}(-\lambda t^{\nu})$ (see also \citet{BKS96}) where $E_{\beta}$ is the Mittag-Leffler function which can be also written as 
\begin{equation}
E_{\nu}(-\lambda t^{\nu}) = \frac{1}{\pi} \int_0^\infty \exp\left\lbrace -\lambda^{1/\nu}t x \right\rbrace  \frac{x^{\nu -1} \sin \pi \nu}{1 + 2x^{\nu} \cos \pi \nu + x^{2\nu}} dx, \quad t>0,\, \lambda >0. \label{E}
\end{equation}
The distribution $l_{\nu}$ satisfies the fractional equation $\frac{\partial^\nu}{\partial t^\nu} l_{\nu} = - \frac{\partial}{\partial x} l_{\nu}$, $x>0$, $t>0$ subject to $l_{\nu}(x,0)=\delta(x)$ where the fractional derivative must be understood in the Dzerbayshan-Caputo sense (formula \eqref{Cfracder}). The governing equation of $l_\nu$ can be also presented by considering the Riemann-Liouville derivative \eqref{Rfracder} and the relation \eqref{RCfracder} (see e.g. \citet{BM01, MS08, BMN09}). It is well-known that the ratio involving two independent stable subordinator $\{\,_1\tilde{\tau}^{(\nu)}_t, \, t>0\}$ and $\{\, _2\tilde{\tau}^{(\nu)}_t,\, t>0 \}$ has a distribution, $\forall t$, given by
\begin{equation}
r(w)=Pr\{ \, _1\tilde{\tau}^{(\nu)}_t /\, _2\tilde{\tau}^{(\nu)}_t \in dw \} /dw = \frac{1}{\pi} \frac{w^{\nu -1} \sin \pi \nu}{1+ 2 w^{\nu}\cos \pi \nu + w^{2\nu}}, \quad w>0, \, t>0.\label{ratioU}
\end{equation}
Here we study the ratio of two independent inverse stable processes $\{ \, _1L^{(\nu)}_t, \, t>0\}$ and $\{ \, _2L^{(\nu)}_t, \, t>0\}$  by evaluating its Mellin transform as follows
\begin{align}
E\left\lbrace \, _1L^{(\nu)}_t / \, _2L^{(\nu)}_t \right\rbrace^{\eta -1} = & \mathcal{M}\left[ l_{\nu}(\cdot, t) \right](\eta) \times \mathcal{M}\left[ l_{\nu}(\cdot, t) \right](2-\eta) = \frac{1}{\nu} \frac{\sin \nu \pi - \eta \nu \pi }{\sin \eta \pi } \label{masd}
\end{align}
with $\Re \{ \eta \} \in (0,1)$. By inverting \eqref{masd} we obtain
\begin{equation}
k(x)=\frac{1}{\nu \pi} \frac{\sin \nu \pi}{1 + 2 x \cos \nu \pi + x^2} = \frac{1}{2\pi i}\int_{\theta -i \infty}^{\theta + i \infty} \frac{\sin \nu \pi - \eta \nu \pi }{\sin \eta \pi } x^{-\eta} d\eta \label{ratioD}
\end{equation}
for some real $\theta \in (0,1)$. From \eqref{ratioU} and \eqref{ratioD} we can argue that
\begin{equation}
\left( _1\tilde{\tau}^{(\nu)}_t /\, _2\tilde{\tau}^{(\nu)}_t \right)^{\nu} \stackrel{law}{=} \, _1L^{(\nu)}_t / \, _2L^{(\nu)}_t, \quad \forall t>0. \label{indT}
\end{equation}
We notice that the equivalence in law \eqref{indT} is independent of $t$ as the formulae \eqref{ratioU} and \eqref{ratioD} entail. The distribution $h_{\nu} \circ l_{\nu}(x,t)$ of the process $\{\tilde{\tau}^{(\nu)}_{L^{(\nu)}_t}, \, t>0\}$ has Mellin transform  (by making use of the formulae \eqref{melSsub} and \eqref{xMSub}) given by
\begin{align*}
\mathcal{M}\left[ h_{\nu} \circ l_{\nu}(\cdot ,t) \right](\eta) = & \mathcal{M}\left[ h_{\nu}(\cdot ,1) \right](\eta) \times \mathcal{M}\left[ l_{\nu}(\cdot ,t) \right]\left(\frac{\eta -1}{\nu} +1 \right) = \frac{1}{\nu} \frac{\sin \pi \eta}{\sin \pi \frac{1-\eta}{\nu}}t^{\eta -1}, \quad  t>0
\end{align*}
with $\Re\{ \eta \} \in (0,1)$. Thus, we can infer that
\begin{equation*}
\tilde{\tau}^{(\nu)}_{L^{(\nu)}_t} \stackrel{law}{=} t \times \, _1\tilde{\tau}^{(\nu)}_t /\, _2\tilde{\tau}^{(\nu)}_t \quad t>0
\end{equation*}
and $h_{\nu} \circ l_{\nu}(x,t) = t^{-1}r(x/t)$ where $r(w)$ is that in \eqref{ratioU}. For the process $\{L^{(\nu)}_{\tilde{\tau}^{(\nu)}_t}, \; t>0 \}$ with distribution $l_{\nu} \circ h_{\nu}(x,t)$ we obtain (from \eqref{xMSub} and \eqref{melSsub})
\begin{align*}
\mathcal{M}\left[ l_{\nu} \circ h_{\nu}(\cdot, t) \right](\eta) = & \mathcal{M}\left[ l_{\nu}(\cdot, 1) \right](\eta) \times \mathcal{M}\left[ h_{\nu}(\cdot, t) \right](\eta \nu - \nu +1) = \frac{1}{\nu} \frac{\sin \pi \nu - \pi \eta \nu}{\sin \pi \eta} t^{\eta -1}
\end{align*}
with $\Re\{ \eta \} \in (0,1)$ and thus
\begin{equation*}
L^{(\nu)}_{\tilde{\tau}^{(\nu)}_t} \stackrel{law}{=} t \times \, _1L^{(\nu)}_t / \, _2L^{(\nu)}_t, \quad t>0.
\end{equation*}
We have that $l_{\nu} \circ h_{\nu}(x,t) = t^{-1} k(x/t)$ where $k(x)$ is that in \eqref{ratioD}.

\section{Main results}
\label{mainRes}
In this section we consider compositions of processes whose governing equations are (generalized) fractional diffusion equations. When we consider compositions involving Markov processes and stable subordinators we still have Markov processes.  Here we study Markov processes with random time which is the inverse of a stable subordinator. Such a process does not belong to the family of stable subordinators (see \eqref{E}) and the resultant composition is not, in general, a Markov process. This somehow explains the effect of the fractional derivative appearing in the governing equation, see \citet{MPG07}. Hereafter, we exploit the Mellin convolution of generalized Gamma densities in order to write explicitly the solutions to fractional diffusion equations. We first present a new representation of the density law $h_\nu$ by means of the convolution $e^{\star n}_{\bar{\mu}}$ introduced in Section \ref{secConv}. To do this we also introduce the time-stretching function $\varphi_m(s) = (s/m)^m$, $m \geq 1$, $s \in (0,\infty)$.
\begin{lm}
The Mellin convolution $e^{\star n}_{\bar{\mu}}(x, \varphi_{n+1}(t))$ where $\mu_j = j \, \nu$, for $j=1,2,\ldots, n$ is the density law of a $\nu$-stable subordinator $\{ \tilde{\tau}^{(\nu)}_t,\, t>0 \}$ with $\nu=1/(n+1)$, $n \in \mathbb{N}$. Thus, we have
\begin{equation*}
h_{\nu}(x,t) = e^{\star n}_{\bar{\mu}}(x, \varphi_{n+1}(t)), \quad x>0,\; t>0,\; \nu =1/(n+1), \; n \in \mathbb{N}.
\end{equation*}
\label{lemmaU}
\end{lm}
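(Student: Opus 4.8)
The plan is to prove the identity at the level of $x$-Mellin transforms and then invoke the injectivity of the Mellin transform on a common fundamental strip. Since both $h_{\nu}(\cdot,t)$ and $e^{\star n}_{\bar{\mu}}(\cdot,\varphi_{n+1}(t))$ are probability densities on $(0,\infty)$, it suffices to show that their $x$-Mellin transforms coincide on a strip where both are defined. The transform of $h_{\nu}$ is available from \eqref{melSsub} for $\Re\{\eta\}\in(0,1)$, while the transform of $e^{\star n}_{\bar{\mu}}$ from \eqref{mceW} is analytic for $\Re\{\eta\}<1$ (the factors $\Gamma(\mu_j+1-\eta)$ are finite there since $\mu_j=j\nu>0$). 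The overlap $(0,1)$ is non-empty, so matching the two transforms on it will give equality of the densities by inverse Mellin transform.

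First I would set $m=n+1=1/\nu$ and substitute the time argument $\varphi_{n+1}(t)=(t/m)^m$ together with $\mu_j=j\nu$ into \eqref{mceW}, obtaining
\begin{equation*}
\mathcal{M}\left[e^{\star n}_{\bar{\mu}}(\cdot,\varphi_{n+1}(t))\right](\eta)=\left(\frac{t}{m}\right)^{m(\eta-1)}\prod_{j=1}^{n}\frac{\Gamma(j\nu+1-\eta)}{\Gamma(j\nu)}.
\end{equation*}
Comparing the $t$-dependence with \eqref{melSsub}, the exponent $m(\eta-1)=(\eta-1)/\nu$ already matches, so the whole claim reduces to an identity among Gamma factors and powers of $m$.

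The heart of the proof is the evaluation of the two Gamma products via the Gauss--Legendre multiplication formula
\begin{equation*}
\prod_{k=0}^{m-1}\Gamma\!\left(z+\frac{k}{m}\right)=(2\pi)^{(m-1)/2}\,m^{1/2-mz}\,\Gamma(mz).
\end{equation*}
Taking $z=1-\eta$ and peeling off the $k=0$ term $\Gamma(1-\eta)$ converts the numerator product $\prod_{j=1}^{m-1}\Gamma((1-\eta)+j/m)=\prod_{j=1}^{n}\Gamma(j\nu+1-\eta)$ into $(2\pi)^{(m-1)/2}m^{1/2-m(1-\eta)}\Gamma((1-\eta)/\nu)/\Gamma(1-\eta)$, using $(1-\eta)+j\nu=j\nu+1-\eta$ and $m(1-\eta)=(1-\eta)/\nu$. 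For the denominator I would use the special case $z=1/m$, giving the classical constant $\prod_{j=1}^{m-1}\Gamma(j/m)=(2\pi)^{(m-1)/2}/\sqrt{m}$. Dividing the two products, the factors $(2\pi)^{(m-1)/2}$ cancel; combining the surviving powers of $m$ with the prefactor $m^{-m(\eta-1)}=m^{m(1-\eta)}$ coming from $(1/m)^{m(\eta-1)}$, all powers of $m$ collapse to a single factor $m=1/\nu$. What remains is exactly
\begin{equation*}
\frac{1}{\nu}\,\frac{\Gamma((1-\eta)/\nu)}{\Gamma(1-\eta)}\,t^{(\eta-1)/\nu},
\end{equation*}
which is precisely \eqref{melSsub}.

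The main obstacle is the bookkeeping in the multiplication formula: correctly matching the shifted arguments $j\nu+1-\eta$ to $z+k/m$, tracking the constants $(2\pi)^{(m-1)/2}$, and taming the several competing powers of $m$ (one from $(t/m)^{m(\eta-1)}$ and one from each Gamma product) so that they telescope to the single factor $1/\nu$. I expect no analytic difficulty beyond this, since convergence of the convolution and the validity of \eqref{mceW} are already in hand; the only point to record explicitly is that the common strip $\Re\{\eta\}\in(0,1)$ is non-empty, so that uniqueness of the Mellin inversion yields $h_{\nu}(x,t)=e^{\star n}_{\bar{\mu}}(x,\varphi_{n+1}(t))$.
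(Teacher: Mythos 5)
Your proposal is correct and follows essentially the same route as the paper: compute the $x$-Mellin transform of $e^{\star n}_{\bar{\mu}}(\cdot,\varphi_{n+1}(t))$ from \eqref{mceW}, reduce the Gamma products via the Gauss multiplication formula together with the constant $\prod_{j=1}^{n}\Gamma(j/(n+1))=(2\pi)^{n/2}/\sqrt{n+1}$ (which you rederive as the $z=1/m$ case rather than citing \eqref{propGamma2}), and match the result with \eqref{melSsub}, concluding by uniqueness of the Mellin transform. The bookkeeping of the powers of $m$ in your sketch checks out, so no gap remains.
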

\begin{proof}
From \eqref{mceW} we have that
\begin{equation}
\mathcal{M}\left[ e^{\star n}_{\bar{\mu}}(\cdot, \varphi_{n+1}(t))  \right](\eta) = \frac{\prod_{j=1}^n \Gamma\left( 1-\eta +\mu_j \right)} {\prod_{j=1}^n \Gamma\left( \mu_j \right) } \left( \varphi_{n+1}(t) \right)^{\eta -1}. \label{tmpMq}
\end{equation}
From \citet[formula 8.335.3]{GR} we deduce that
\begin{equation}
\prod_{k=1}^{n} \Gamma\left( \frac{k}{n+1}\right) = \frac{(2\pi)^\frac{n}{2}}{\sqrt{n + 1}}, \quad n \in \mathbb{N}
\label{propGamma2}
\end{equation}
and formula \eqref{tmpMq} reduces to
\begin{equation}
\mathcal{M}\left[ e^{\star n}_{\bar{\mu}}(\cdot, \varphi_{n+1}(t))  \right](\eta) = \frac{\prod_{j=1}^n \Gamma\left( 1-\eta +\mu_j \right)} {(2\pi)^{n/2} \sqrt{\nu} } \left( \varphi_{n+1}(t) \right)^{\eta -1}. \label{tmpMq2}
\end{equation}
Furthermore, by making use of the (product theorem) relation
\begin{equation}
\Gamma(nx) = (2\pi)^\frac{1-n}{2} n^{nx -1/2} \prod_{k=0}^{n-1} \Gamma\left( x + \frac{k}{n} \right)
\end{equation}
(see \citet[formula 3.335]{GR}) formula \eqref{tmpMq2} becomes
\begin{align*}
\mathcal{M}\left[ e^{\star n}_{\bar{\mu}}(\cdot, \varphi_n(t))  \right](\eta) = & \frac{\Gamma\left( \frac{1-\eta}{\nu} \right) (2\pi)^{n /2} (n+1)^{\eta/\nu - n}}{\Gamma\left( 1- \eta \right) (2\pi)^{n/2}} \left( \varphi_{n+1}(t) \right)^{\eta -1} = \frac{\Gamma\left( \frac{1-\eta}{\nu} \right)}{\nu \, \Gamma\left( 1- \eta \right)} t^\frac{\eta -1}{\nu}
\end{align*}
(with $\Re\{ \eta \} \in (0,1)$) which coincides with \eqref{melSsub}. The claimed result is obtained.
\end{proof}
In light of the last result we are able to write explicitly the density law of a stable subordinator. For $\nu=1/2$, Lemma  \ref{lemmaU} says that
\begin{equation}
h_{1/2}(x,t) = e^{\star 1}_{\bar{\mu}} (x, \varphi_2(t)) = e_{1/2}(x, (t/2)^2) = \frac{x^{-1/2-1} e^{-\frac{t^2}{4x}}}{ t^{-1} \sqrt{4}\, \Gamma\left( \frac{1}{2} \right)}, \quad x>0, \, t>0
\end{equation}
which is the well-known density law of the $1/2$-stable subordinator or the first-passage time of a standard Brownian motion trough the level $t/\sqrt{2}$. For $\nu=1/3$, from \eqref{eConv}, we obtain 
\begin{equation*}
h_{1/3}(x,t)=e^{\star 2}_{\bar{\mu}}(x, \varphi_3(t)) = e_{1/3} \star e_{2/3} (x, (t/3)^3) =  \frac{1}{3 \pi} \frac{t^{3/2}}{x^{3/2}} K_\frac{1}{3}\left(\frac{2}{3^{3/2}} \frac{t^{3/2}}{\sqrt{x}} \right), \quad x>0,\, t>0.
\end{equation*}
For $\nu=1/4$, by \eqref{eConv} (and the commutativity under $\star$, see Lemma \ref{LemmaComm}), we have
\begin{equation*}
h_{1/4}(x,t)=e^{\star 3}_{\bar{\mu}}(x, \varphi_4(t)) =  e_{1/4} \star e_{2/4} \star e_{3/4} (x, (t/4)^4) = e_{1/2} \star (e_{1/4} \star e_{3/4}) (x, (t/4)^4) 
\end{equation*}
where $K_{1/2}(z)=\sqrt{\pi/2z}\exp\{-z\}$ (see \cite[formula 8.469]{GR}). We notice that
\begin{equation*}
\mathcal{M}\left[ e^{\star 3}_{\bar{\mu}}(\cdot, \varphi_4(t)) \right](\eta) = \mathcal{M}\left[ h_{1/2} \circ h_{1/2} (\cdot ,t) \right](\eta)
\end{equation*}
which is in line with the well-known fact that 
\begin{equation*}
E \exp\left\lbrace - \lambda \, _1\tilde{\tau}^{(\nu_1)}_{\, _2\tilde{\tau}^{(\nu_2)}_t}\right\rbrace = E \exp\left\lbrace - \lambda^{\nu_1} \, _2\tilde{\tau}^{(\nu_2)}_t \right\rbrace = \exp\{ - t \lambda^{\nu_1 \nu_2} \},  
\end{equation*}
$0 < \nu_i <1, \, i=1,2$.  For $\nu=1/5$, by exploiting twice \eqref{eConv} (and the commutativity under $\star$), we can write
\begin{align}
h_{1/5}(x,t)=e^{\star 4}_{\bar{\mu}}(x, (t/5)^5) = & (e_{1/5} \star e_{2/5}) \star (e_{3/5} \star e_{4/5}) (x, (t/5)^5) \label{dist:e4Unm} \\
= & \frac{t^{7/2}}{5^3 \pi^2 \, x^{3/10 +1}} \int_{0}^\infty s^{-2/5 -1} K_\frac{1}{5}\left( 2 \sqrt{\frac{s}{x}} \right) K_\frac{1}{5}\left( \frac{2}{5^{5/2}} \frac{t^{5/2}}{\sqrt{s}} \right) ds\nonumber
\end{align}
or equivalently 
\begin{align}
h_{1/5}(x,t)=e^{\star 4}_{\bar{\mu}}(x, (t/5)^5) = & (e_{1/5} \star e_{3/5}) \star (e_{2/5} \star e_{4/5}) (x, (t/5)^5) \label{dist:e4Dnm}\\
= & \frac{t^3}{5^{5/2} \pi^2 \, x^{2/5 +1}} \int_{0}^\infty s^{-1/5-1} K_\frac{2}{5}\left( 2 \sqrt{\frac{s}{x}} \right) K_\frac{2}{5}\left( \frac{2}{5^{5/2}} \frac{t^{5/2}}{\sqrt{s}} \right) ds. \nonumber
\end{align}
For $\nu=1/(2n+1)$, $n \in \mathbb{N}$, by using repeatedly \eqref{eConv} we arrive at
\begin{equation*}
h_{\nu}(x,t) = \frac{x^{\nu /2} \, t^{1/\nu- 3/2}}{\nu^{2-1/\nu} \pi^{1/2\nu - 1/2}} \mathcal{K}^{\circ\, n}_{\nu} \left( x, (\nu t)^{1/\nu} \right), \quad x>0,\, t>0
\end{equation*}
where
\begin{equation*}
\mathcal{K}^{\circ \,n}_{\nu}(x,t) = \int_0^\infty  \ldots \int_0^\infty  \mathcal{K}_{\nu}(x,s_1) \ldots \mathcal{K}_{\nu}(s_{n-1},t)\, ds_1 \ldots ds_{n-1}
\end{equation*}
is the integral \eqref{circConv} (as the symbol "$\circ \, n$" denote) where $n$ functions are involved and $\mathcal{K}_{\nu}(x,t) = x^{-2\nu -1} K_{ \nu}\left( 2 \sqrt{t/x} \right)$, $x>0$,  $t>0$. We state a similar result for the density law $l_\nu$ and the convolution $g^{\gamma, \star n}_{\bar{\mu}}$ (see Section \ref{secConv}). Let us consider the time-stretching function $\psi_m(s) = m \, s^{1/m}$, $s \in (0, \infty)$, $m \in \mathbb{N}$, ($\psi = \varphi^{-1}$ where $\varphi$ has been introduced in the previous Lemma). 
\begin{lm}
The Mellin convolution $g^{(n+1), \star n}_{\bar{\mu}}(x, \psi_{n+1}(t))$ where $\mu_j  = j\, \nu$, $j=1,2,\ldots ,  n$ and $\nu=1/(n+1)$, $n \in \mathbb{N}$, is the density law of a $\nu$-inverse process $\{L^{(\nu)}_t, \, t>0 \}$. Thus, we have
\begin{equation*}
l_{\nu}(x,t) = g^{(n+1), \star n}_{\bar{\mu}}(x, \psi_{n+1}(t)), \quad x>0, \; t>0, \; \nu=1/(n+1),\; n \in \mathbb{N}.
\end{equation*}
\label{LemmaD}
\end{lm}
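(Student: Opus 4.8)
The plan is to mirror the proof of Lemma \ref{lemmaU}: I would compute the ($x$-)Mellin transform of $g^{(n+1),\star n}_{\bar{\mu}}(x,\psi_{n+1}(t))$ and verify that it coincides with the transform \eqref{xMSub} of $l_\nu$, after which Mellin inversion on a common vertical strip of analyticity yields the pointwise identity. Since both \eqref{mcgW} and \eqref{xMSub} are transforms with respect to the first (space) argument, the two sides can be compared directly.

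First I would specialize \eqref{mcgW} to $\gamma = n+1$ and $\mu_j = j/(n+1)$, and insert the time stretch $\psi_{n+1}(t) = (n+1)\,t^{1/(n+1)}$. Because $\frac{\eta-1}{n+1} + \mu_j = \frac{\eta-1+j}{n+1}$, the numerator factor $\prod_{j=1}^n \Gamma(\cdots)$ rewrites as $\prod_{k=0}^{n-1}\Gamma\left(\frac{\eta+k}{n+1}\right)$, i.e.\ $n$ consecutive Gamma arguments in arithmetic progression with common difference $1/(n+1)$.

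The key step is the product (multiplication) theorem applied with $z = \eta/(n+1)$ and multiplier $m = n+1$, which gives $\prod_{k=0}^{n}\Gamma\left(\frac{\eta+k}{n+1}\right) = (2\pi)^{n/2}(n+1)^{1/2-\eta}\Gamma(\eta)$. Our product is missing only the $k=n$ term, so dividing by $\Gamma\left(\frac{\eta+n}{n+1}\right)$ isolates $\prod_{k=0}^{n-1}\Gamma\left(\frac{\eta+k}{n+1}\right) = (2\pi)^{n/2}(n+1)^{1/2-\eta}\Gamma(\eta)\big/\Gamma\left(\frac{\eta+n}{n+1}\right)$. For the denominator $\prod_{j=1}^n\Gamma(\mu_j) = \prod_{j=1}^n\Gamma\left(\frac{j}{n+1}\right)$ I would invoke \eqref{propGamma2}, which equals $(2\pi)^{n/2}(n+1)^{-1/2}$. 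Collecting all powers, and including the factor $(\psi_{n+1}(t))^{\eta-1} = (n+1)^{\eta-1}t^{(\eta-1)/(n+1)}$, the $(2\pi)^{n/2}$ terms cancel and the $(n+1)$-powers telescope to zero exponent, leaving exactly $\Gamma(\eta)\big/\Gamma\left(\frac{\eta+n}{n+1}\right)\,t^{(\eta-1)/(n+1)}$. Since $\frac{\eta+n}{n+1} = \eta\nu - \nu + 1$ and $\frac{\eta-1}{n+1} = \nu(\eta-1)$ for $\nu = 1/(n+1)$, this is precisely the transform \eqref{xMSub} of $l_\nu$.

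I expect the main obstacle to be purely bookkeeping: recognizing the ``missing'' $k=n$ factor so that the multiplication theorem applies to a complete block of $n+1$ Gammas, and then tracking the exponents of $(n+1)$ across its three sources (the numerator via the product theorem, the denominator via \eqref{propGamma2}, and the time-stretch). I would also record the strip of validity: \eqref{mcgW} requires $\Re\{\eta\} > 1-\nu$ while \eqref{xMSub} requires $\Re\{\eta\} < 1/\nu$, and these overlap since $\nu \le 1/2$ for $n\ge 1$; hence the two Mellin transforms agree on a common vertical strip and inversion gives $l_\nu(x,t) = g^{(n+1),\star n}_{\bar{\mu}}(x,\psi_{n+1}(t))$.
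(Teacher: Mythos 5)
Your proposal is correct and follows exactly the route the paper intends: the paper's own proof of this lemma is the single line ``carried out as the proof of Lemma \ref{lemmaU}'', i.e.\ compute the Mellin transform via \eqref{mcgW}, reduce the Gamma products with the multiplication theorem and \eqref{propGamma2}, and match against \eqref{xMSub}. Your bookkeeping (the shift $\frac{\eta-1+j}{n+1}$, the missing $k=n$ factor, the cancellation of the $(2\pi)^{n/2}$ terms and the telescoping of the powers of $n+1$, and the overlap of the strips of analyticity) is accurate and in fact supplies the details the paper omits.
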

\begin{proof}
The proof can be carried out as the proof of Lemma \ref{lemmaU}.
\end{proof}
We obtain that $ l_{1/2}(x,t) = g^2_{1/2}(x, 2t^{1/2})  = e^{-\frac{x^2}{4t}}/\sqrt{\pi t}$, $x>0$, $t>0$. Moreover, by making use of \eqref{eConv} and \eqref{propGamma2}, we have that
\begin{equation*}
l_{1/3}(x,t) = g^3_{2/3} \star g^3_{1/3}(x, 3t^{1/3}) =  \frac{1}{\pi} \sqrt{\frac{x}{t}} K_\frac{1}{3}\left( \frac{2}{3^{3/2}}\frac{x^{3/2}}{\sqrt{t}} \right), \quad x>0,\; t>0
\end{equation*}
and $l_{1/4}(x,t) =g^4_{3/4} \star g^4_{2/4} \star g^4_{1/4}(x, 4t^{1/4})$ follows (thank to the commutativity under $\star$) from
\begin{align*}
g^4_{3/4} \star g^4_{2/4} \star g^4_{1/4}(x, t) = & g^4_{1/2} \star (g^4_{3/4} \star g^4_{1/4})(x, t) =  \frac{2^{3/2}}{\pi} \frac{x}{t} \int_0^\infty \exp\left\lbrace -(sx)^4 - \frac{2}{(st)^2}\right\rbrace ds
\end{align*}
where $g^4_{3/4} \star g^4_{1/4}(x,t)$ is given by \eqref{eConv} and $K_{1/2}(z)=\sqrt{\pi/2z}\exp\{-z\}$ (see \cite[formula 8.469]{GR}).  In a more general setting, by making use of \eqref{eConv} we can write down
\begin{equation}
g^{1/\nu, \star (1 /\nu - 1)}_{\bar{\mu}}(x, t) = \frac{1}{\nu^{1/2\nu}} \left( \frac{x}{\pi^2 t^3} \right)^\frac{1-\nu}{4\nu} \mathcal{Q}^{\circ n}_{\frac{1-\nu}{2}}(x,t), \quad \nu =1/(2n+1), \; n \in \mathbb{N} \label{genl}
\end{equation}
where the symbol ''$\circ \, n$ '' stands for the integral \eqref{circConv} where $n$ functions $\mathcal{Q}_{\frac{1-\nu}{2}}$ are involved and 
\begin{equation}
\mathcal{Q}_{\frac{1-\nu}{2}}(x,t)=K_{\frac{1-\nu}{2}} \left( 2\sqrt{ (x / t)^{1/\nu}} \right), \quad x>0,\; t>0. \label{Qrond}
\end{equation}

Now, we present the main result of this paper concerning the explicit solutions to (generalized) fractional diffusion equations. We study a generalized problem which leads to fractional diffusion equations involving the adjoint operators of both Bessel and squared Bessel processes. Let us introduce the distribution $\tilde{u}^{\gamma, \mu}_{\nu} = \tilde{g}^{\gamma}_{\mu} \circ l_{\nu}$ where $\tilde{g}^\gamma_\mu(x,t) = g^\gamma_{\mu}(x, t^{1/\gamma})$ and the Mellin transform of $\tilde{u}^{\gamma, \mu}_{\nu}$ which reads
\begin{equation}
\mathcal{M}\left[ \tilde{u}^{\gamma, \mu}_{\nu}(\cdot, t) \right](\eta) = \frac{\Gamma\left( \frac{\eta -1}{\gamma} +\mu \right) \Gamma\left( \frac{\eta -1}{\gamma} +1 \right)}{\Gamma(\mu) \Gamma\left( \frac{\eta -1}{\gamma}\nu +1 \right)} t^{\frac{\eta -1}{\gamma}\nu}, \quad 1-\gamma \mu < \Re\{\eta \} < 1+ \gamma /\nu - \gamma.
\label{MelFracq}
\end{equation}
We state the following result.
\begin{te}
Let the previous setting prevail. For $\nu=1/(2n+1)$, $n \in \mathbb{N} \cup \{0\}$, the solutions to
\begin{equation}
D^{\nu}_t \, \tilde{u}^{\gamma, \mu}_{\nu} = \mathcal{G}_{\gamma,\mu} \, \tilde{u}^{\gamma, \mu}_{\nu}, \quad x > 0,\; t>0 \label{ProbFrac}
\end{equation}
can be represented in terms of generalized Gamma convolution as
\begin{equation}
\tilde{u}^{\gamma, \mu}_{\nu} (x,t) =  \gamma \frac{x^{2\mu -1} \nu^\frac{1-3\nu}{4\nu}}{(\pi^2 t^{3\nu})^\frac{1-\nu}{4\nu}} \int_0^\infty s^{\frac{1}{4\nu} - \frac{1}{4} -\mu} e^{- x^\gamma / s}  \, v_{\nu}(s,t) \, ds, \quad x \geq 0, \; t>0 \label{solFracq}
\end{equation}
where $\mathcal{G}_{\gamma,\mu}$ is the operator appearing in \eqref{generatorG},
\begin{equation*}
v_{\nu}(s,t) = \int_0^\infty \ldots \int_0^\infty \mathcal{Q}_{\frac{1-\nu}{2}}(s, s_1) \ldots \mathcal{Q}_{\frac{1-\nu}{2}}(s_{n-1}, t^{\nu}/\nu) ds_1 \ldots ds_{n-1}
\end{equation*}
and $\mathcal{Q}_{\frac{1-\nu}{2}}$ is that in \eqref{Qrond}. Moreover, for $\nu \in (0,1]$, we have
\begin{equation}
\tilde{u}^{\gamma, \mu}_{\nu} (x,t) = \frac{\gamma}{x t^{\nu / \gamma}} H^{2,0}_{2,2} \left[ \frac{x^{\gamma}}{t^{\nu}} \Bigg| \begin{array}{cc} (1,\nu);& (\mu, 0)\\ (1,1);&(\mu , 1)\end{array} \right]
\label{FoxDifa}
\end{equation}
in terms of H Fox functions.
\label{mainTheorem}
\end{te}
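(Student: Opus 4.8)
The plan is to prove the theorem in three stages: first verify that $\tilde{u}^{\gamma,\mu}_\nu$ solves \eqref{ProbFrac}, then read off the Fox function representation \eqref{FoxDifa}, and finally extract the explicit Bessel-integral form \eqref{solFracq} in the case $\nu=1/(2n+1)$.

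For the governing equation I would work entirely on the Mellin side in the space variable $x$, starting from \eqref{MelFracq}. Writing $\zeta=(\eta-1)/\gamma$, the transform reads $\mathcal{M}[\tilde{u}^{\gamma,\mu}_\nu(\cdot,t)](\eta)=\frac{\Gamma(\zeta+\mu)\Gamma(\zeta+1)}{\Gamma(\mu)\Gamma(\zeta\nu+1)}t^{\zeta\nu}$. On the right-hand side I would compute $\mathcal{M}[\mathcal{G}_{\gamma,\mu}\tilde{u}^{\gamma,\mu}_\nu]$ term by term: applying the first-order derivative rule \eqref{derMint} (with $n=1$) together with the shift $\mathcal{M}[x^a f](\eta)=\mathcal{M}[f](\eta+a)$, immediate from the definition of $\mathcal{M}$, to each of the two summands of \eqref{generatorG} produces the common factor $\mathcal{M}[\tilde{u}^{\gamma,\mu}_\nu](\eta-\gamma)$ multiplied by $\frac{1}{\gamma^2}(\eta-1)[(\eta-1)+\gamma(\mu-1)]=\zeta(\zeta+\mu-1)$. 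Since $\eta\mapsto\eta-\gamma$ sends $\zeta\mapsto\zeta-1$, the functional equations $\zeta\Gamma(\zeta)=\Gamma(\zeta+1)$ and $(\zeta+\mu-1)\Gamma(\zeta+\mu-1)=\Gamma(\zeta+\mu)$ collapse this to $\frac{\Gamma(\zeta+\mu)\Gamma(\zeta+1)}{\Gamma(\mu)\Gamma(\zeta\nu-\nu+1)}t^{(\zeta-1)\nu}$. On the left-hand side, since $D^\nu_t$ acts in $t$ it commutes with the $x$-Mellin transform, and applying it to the power $t^{\zeta\nu}$ via the Riemann--Liouville rule $D^\nu_t t^{a}=\frac{\Gamma(a+1)}{\Gamma(a-\nu+1)}t^{a-\nu}$ yields exactly the same expression. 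Equality of the two Mellin transforms and injectivity of $\mathcal{M}$ then give \eqref{ProbFrac}. For \eqref{FoxDifa} I would then match the poles of \eqref{MelFracq} to the template \eqref{mellinHfox}: the numerator $\Gamma(\frac{\eta-1}{\gamma}+\mu)\Gamma(\frac{\eta-1}{\gamma}+1)$ over the denominator $\Gamma(\frac{\eta-1}{\gamma}\nu+1)$ identifies an $H^{2,0}_{2,2}$ function with the stated parameter array, the $\gamma$-scaling of the argument being absorbed by the homogeneity property \eqref{propH1} with $c=\gamma$ (exactly as in the derivation of \eqref{distGH}); inverting the Mellin transform along $\mathbb{P}(\mathcal{D})$ produces \eqref{FoxDifa}.

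For the explicit representation when $\nu=1/(2n+1)$, I would start from the definition $\tilde{u}^{\gamma,\mu}_\nu=\tilde{g}^\gamma_\mu\circ l_\nu$, that is $\int_0^\infty g^\gamma_\mu(x,s^{1/\gamma})\,l_\nu(s,t)\,ds$, and replace $l_\nu$ by the Mellin-convolution form of Lemma \ref{LemmaD}, namely $l_\nu(s,t)=g^{1/\nu,\star(1/\nu-1)}_{\bar{\mu}}(s,\psi_{1/\nu}(t))$. Substituting the Bessel-product evaluation \eqref{genl}--\eqref{Qrond} and using $\psi_{1/\nu}(t)=t^\nu/\nu$ turns the iterated kernel into $v_\nu(s,t)$, with the final argument becoming $t^\nu/\nu$. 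Finally I would insert $g^\gamma_\mu(x,s^{1/\gamma})=\gamma\,x^{\gamma\mu-1}s^{-\mu}e^{-x^\gamma/s}/\Gamma(\mu)$ from \eqref{dist:QQgenGamma}; collecting the $s$-powers gives the exponent $\frac{1}{4\nu}-\frac14-\mu$, the $\nu$-powers combine via $\nu^{-1/2\nu}(\nu^3)^{(1-\nu)/4\nu}=\nu^{(1-3\nu)/4\nu}$, and the remaining $t$-dependence becomes $(\pi^2 t^{3\nu})^{-(1-\nu)/4\nu}$, reproducing \eqref{solFracq}.

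The main obstacle I anticipate is not the algebra but the rigorous justification of the formal Mellin manipulations in the equation-verification step. The derivative rule \eqref{derMint} requires that the boundary contributions of $x^{2-\gamma}\partial_x\tilde{u}^{\gamma,\mu}_\nu$ and $x^{1-\gamma}\tilde{u}^{\gamma,\mu}_\nu$ vanish at $x=0^{+}$ and $x=\infty$, which must be checked against the behaviour of $\tilde{u}^{\gamma,\mu}_\nu$ encoded by its fundamental strip $1-\gamma\mu<\Re\{\eta\}<1+\gamma/\nu-\gamma$ in \eqref{MelFracq}, since this strip controls precisely the required decay. Equally delicate is the passage between the Riemann--Liouville derivative in \eqref{ProbFrac} and the Caputo-type rule \eqref{derMfrac}: one must verify through \eqref{RCfracder} that the singular initial-data corrections do not contribute, consistently with the $\delta$-type initial behaviour of $l_\nu$ recorded in the introduction. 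Once these analytic points are secured, the three representations follow by the computations sketched above.
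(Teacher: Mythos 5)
Your proposal is correct and follows essentially the same route as the paper's own proof: verification of \eqref{ProbFrac} by computing $\mathcal{M}[\mathcal{G}_{\gamma,\mu}\tilde{u}^{\gamma,\mu}_{\nu}(\cdot,t)](\eta)=\frac{1}{\gamma^2}(\eta-1)(\eta-1+\gamma\mu-\gamma)\mathcal{M}[\tilde{u}^{\gamma,\mu}_{\nu}(\cdot,t)](\eta-\gamma)$ and matching it with $D^{\nu}_t$ applied to \eqref{MelFracq}, identification of \eqref{FoxDifa} from \eqref{mellinHfox} via \eqref{propH1}--\eqref{propH2}, and derivation of \eqref{solFracq} from $\tilde{u}^{\gamma,\mu}_{\nu}=\tilde{g}^{\gamma}_{\mu}\circ l_{\nu}$ together with Lemma \ref{LemmaD} and \eqref{genl}. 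Your added remarks on justifying the boundary terms in \eqref{derMint} and the Riemann--Liouville versus Caputo bookkeeping go beyond what the paper records, but they do not change the argument.
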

\begin{proof}
By exploiting the property \eqref{derMint} of the Mellin transform and the fact that
\begin{equation*}
\int_0^\infty x^{\eta -1} x^{\theta} f(x) dx = \mathcal{M}\left[ f(\cdot) \right](\eta +\theta),
\end{equation*}
for the operator \eqref{generatorG} we have that
\begin{align}
 \mathcal{M} & \left[ \mathcal{G}_{\gamma,\mu} \tilde{u}^{\gamma, \mu}_{\nu}(\cdot, t) \right] (\eta)\nonumber \\  
= & - \frac{1}{\gamma^2} (\eta -1) \mathcal{M}\left[ \frac{\partial}{\partial x} \tilde{u}^{\gamma, \mu}_{\nu}(\cdot, t) \right] (\eta -\gamma +1) + \frac{1}{\gamma^2}(\gamma \mu -1) (\eta -1) \mathcal{M}\left[\tilde{u}^{\gamma, \mu}_{\nu}(\cdot, t) \right](\eta - \gamma)\nonumber \\
= & \frac{1}{\gamma^2}(\eta -1) (\eta -\gamma) \mathcal{M}\left[ \tilde{u}^{\gamma, \mu}_{\nu}(\cdot, t) \right] (\eta -\gamma) + \frac{1}{\gamma^2} (\gamma \mu -1) (\eta -1) \mathcal{M}\left[\tilde{u}^{\gamma, \mu}_{\nu}(\cdot, t) \right](\eta - \gamma)\nonumber \\
= & \frac{1}{\gamma^2} (\eta -1) (\eta - 1 + \gamma \mu - \gamma ) \mathcal{M}\left[\tilde{u}^{\gamma, \mu}_{\nu}(\cdot, t) \right](\eta - \gamma) \label{compDer}
\end{align}
where $\mathcal{M}\left[\tilde{u}^{\gamma, \mu}_{\nu}(\cdot, t) \right](\eta)$ is that in \eqref{MelFracq}. We obtain
\begin{align*}
\mathcal{M} \left[ \mathcal{G}_{\gamma,\mu} \tilde{u}^{\gamma, \mu}_{\nu}(\cdot, t) \right](\eta) = & \frac{1}{\gamma^2} (\eta -1) (\eta - 1 + \gamma \mu - \gamma ) \frac{\Gamma\left( \frac{\eta -\gamma -1}{\gamma} +\mu \right) \Gamma\left( \frac{\eta -\gamma -1}{\gamma} +1 \right)}{\Gamma(\mu) \Gamma\left( \frac{\eta - \gamma -1}{\gamma}\nu +1 \right)} t^{\frac{\eta -\gamma -1}{\gamma}\nu}\\
= & \frac{1}{\gamma} (\eta -1) \frac{\Gamma\left( \frac{\eta -1}{\gamma} +\mu \right) \Gamma\left( \frac{\eta -1}{\gamma} \right)}{\Gamma(\mu) \Gamma\left( \frac{\eta -1}{\gamma}\nu - \nu +1 \right)} t^{\frac{\eta -1}{\gamma}\nu - \nu}\\
= & \frac{\Gamma\left( \frac{\eta -1}{\gamma} +\mu \right) \Gamma\left( \frac{\eta -1}{\gamma} + 1\right)}{\Gamma(\mu) \Gamma\left( \frac{\eta -1}{\gamma}\nu - \nu +1 \right)} t^{\frac{\eta -1}{\gamma}\nu - \nu} = D^{\nu}_t \mathcal{M} \left[ \tilde{u}^{\gamma, \mu}_{\nu}(\cdot, t) \right] (\eta)
\end{align*}
and $\tilde{u}^{\gamma, \mu}_{\nu}(x, t)$ solves \eqref{ProbFrac} for $\nu \in (0,1)$. In view of Lemma \ref{LemmaD} we can write
\begin{equation*}
\tilde{u}^{\gamma, \mu}_{\nu}(x, t) = \int_0^\infty \tilde{g}^\gamma_\mu (x,s) \; g^{1/\nu, \star (1 /\nu - 1)}_{\bar{\mu}}(s, \psi_{1/\nu}(t)) \; ds
\end{equation*}
and by means of \eqref{genl} result \eqref{solFracq} appears. Formula \eqref{FoxDifa} follows directly from \eqref{mellinHfox} by considering formula \eqref{propH1} and the fact that
\begin{equation}
\H = \frac{1}{x^c} \; H^{m,n}_{p,q}\left[ x \bigg| \begin{array}{l} (a_i + c \alpha_i, \alpha_i)_{i=1, .. , p}\\ (b_j + c \beta_j, \beta_j)_{j=1, .. , q}  \end{array} \right] \label{propH2}
\end{equation}
for all $c \in \mathbb{R}$ (see \citet{MS73}). 
\end{proof}

We specialize the previous result by keeping in mind formula \eqref{RCfracder} and the operator \eqref{generatorG}.
\begin{coro}
For $\nu=1$, Theorem \ref{mainTheorem} says that
\begin{equation*}
\frac{\partial}{\partial t} \tilde{u}^{\gamma, \mu}_{1} = \frac{1}{\gamma^2} \left( \frac{\partial}{\partial x} x^{2-\gamma} \frac{\partial}{\partial x} - (\gamma \mu -1) \frac{\partial}{\partial x} x^{1-\gamma} \right) \tilde{u}^{\gamma, \mu}_{1}, \quad x > 0, \; t>0,\; \gamma \neq 0
\end{equation*}
where $\tilde{u}^{\gamma, \mu}_{1} = \tilde{g}^{\gamma}_{\mu}$ is the distribution of the GGP.
\label{coro1}
\end{coro}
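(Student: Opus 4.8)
The plan is to read off the corollary as the endpoint case $\nu=1$ (equivalently $n=0$) of Theorem \ref{mainTheorem} and to check that every fractional object degenerates to its classical analogue. First I would observe that the Riemann-Liouville operator $D^1_t$ coincides with the ordinary time derivative $\partial/\partial t$: applying \eqref{RCfracder} with $\alpha=1$ and $n=1$, the correction sum consists of the single term $f|_{t=0^+}\, t^{-1}/\Gamma(0)$, which vanishes because $1/\Gamma(0)=0$, so the Riemann-Liouville and Caputo derivatives both collapse to $\partial_t$. Hence \eqref{ProbFrac} at $\nu=1$ becomes $\partial_t\tilde{u}^{\gamma,\mu}_1=\mathcal{G}_{\gamma,\mu}\tilde{u}^{\gamma,\mu}_1$, and expanding $\mathcal{G}_{\gamma,\mu}$ through its definition \eqref{generatorG} reproduces verbatim the divergence-form equation in the statement.

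Next I would identify $\tilde{u}^{\gamma,\mu}_1$ with $\tilde{g}^{\gamma}_{\mu}$. The quickest route is the Mellin transform: substituting $\nu=1$ into \eqref{MelFracq} lets the factor $\Gamma(\tfrac{\eta-1}{\gamma}+1)$ in the numerator cancel the denominator factor $\Gamma(\tfrac{\eta-1}{\gamma}\nu+1)=\Gamma(\tfrac{\eta-1}{\gamma}+1)$, leaving
\begin{equation*}
\mathcal{M}\left[\tilde{u}^{\gamma,\mu}_1(\cdot,t)\right](\eta)=\frac{\Gamma\left(\tfrac{\eta-1}{\gamma}+\mu\right)}{\Gamma(\mu)}\,t^{\frac{\eta-1}{\gamma}}.
\end{equation*}
Since $\tilde{g}^{\gamma}_{\mu}(x,t)=g^{\gamma}_{\mu}(x,t^{1/\gamma})$, the $x$-Mellin transform of $g^{\gamma}_{\mu}$ recalled just before \eqref{lastM} gives exactly this expression, and uniqueness of the Mellin transform on the common strip $\Re\{\eta\}>1-\gamma\mu$ yields $\tilde{u}^{\gamma,\mu}_1=\tilde{g}^{\gamma}_{\mu}$, the distribution of the GGP. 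Equivalently, one may argue from the Fox representation \eqref{FoxDifa}: at $\nu=1$ the upper pair $(1,\nu)=(1,1)$ coincides with the lower pair $(1,1)$, their $\Gamma$-factors cancel in the Mellin--Barnes kernel \eqref{mellinHfox}, and the $H^{2,0}_{2,2}$ degenerates into the $H^{1,0}_{1,1}$ of \eqref{distGH}.

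I do not expect a genuine obstacle, as this is a boundary specialization of an already proven result; the two points deserving care are (i) the vanishing of the initial-value correction in \eqref{RCfracder} at integer order, which justifies $D^1_t=\partial_t$, and (ii) the fact that the $\Gamma$-cancellation in \eqref{MelFracq} (equivalently the pole cancellation in \eqref{FoxDifa}) takes place on a non-empty fundamental strip, which holds because the bound $\Re\{\eta\}<1+\gamma/\nu-\gamma$ in \eqref{MelFracq} tends to $\Re\{\eta\}<1$ as $\nu\to1$ and remains compatible with $\Re\{\eta\}>1-\gamma\mu$. As a final consistency check I would note that the unnormalised equation $\partial_t g^{\gamma}_{\mu}=\tfrac{d(t^\gamma)}{dt}\,\mathcal{G}_{\gamma,\mu}g^{\gamma}_{\mu}$ stated in Section \ref{secConv} turns, under the time rescaling $t\mapsto t^{1/\gamma}$ defining $\tilde{g}^{\gamma}_{\mu}$, into the clean equation $\partial_t\tilde{g}^{\gamma}_{\mu}=\mathcal{G}_{\gamma,\mu}\tilde{g}^{\gamma}_{\mu}$, in agreement with the corollary.
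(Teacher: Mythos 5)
Your proposal is correct and lives in the same Mellin-transform world as the paper's proof: both identify $\tilde{u}^{\gamma,\mu}_{1}$ with $\tilde{g}^{\gamma}_{\mu}$ by cancelling $\Gamma\bigl(\tfrac{\eta-1}{\gamma}+1\bigr)$ against $\Gamma\bigl(\tfrac{\eta-1}{\gamma}\nu+1\bigr)$ in \eqref{MelFracq} at $\nu=1$ and matching the result with the transform of the GGP density (this is exactly the paper's formula \eqref{Zyyy}). The one real divergence is where the burden of proving the p.d.e.\ is placed. You delegate it to Theorem \ref{mainTheorem} at the endpoint $\nu=1$; but the paper's proof of that theorem explicitly concludes only that $\tilde{u}^{\gamma,\mu}_{\nu}$ solves \eqref{ProbFrac} for $\nu\in(0,1)$, which is precisely why the corollary carries its own proof. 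The paper closes this gap by differentiating $\Psi_t(\eta)$ in $t$ and checking that $\partial_t\Psi_t(\eta)=\tfrac{1}{\gamma^2}(\eta-1)(\eta-\gamma-1+\gamma\mu)\,\Psi_t(\eta-\gamma)$, i.e.\ that the $t$-derivative reproduces the Mellin image \eqref{compDer} of $\mathcal{G}_{\gamma,\mu}$. Your final ``consistency check'' via the unnormalised equation $\partial_t g^{\gamma}_{\mu}=\tfrac{d(t^{\gamma})}{dt}\,\mathcal{G}_{\gamma,\mu}\,g^{\gamma}_{\mu}$ would supply exactly this verification, except that in the paper that equation is only \emph{announced} in Section \ref{secConv} as a consequence of Corollary \ref{coro1}, so using it here is circular; you should instead promote the explicit $\partial_t\Psi_t(\eta)$ computation (or a direct differentiation of $\tilde{g}^{\gamma}_{\mu}$) from a ``check'' to the actual argument. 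Your observations that $D^1_t=\partial_t$ (vanishing of the $1/\Gamma(0)$ correction in \eqref{RCfracder}) and that the $H^{2,0}_{2,2}$ in \eqref{FoxDifa} degenerates to the $H^{1,0}_{1,1}$ of \eqref{distGH} are correct and are nice complements the paper leaves implicit.
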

This is because $L^{1}_t \stackrel{a.s.}{=} t$. Indeed, for $\nu = 1$, $L^{\nu}_t$ is the elementary subordinator (see \cite{Btoi96}).
\begin{proof}
If $\nu=1$, then the equation \eqref{MelFracq} takes the form
\begin{equation}
\Psi_t(\eta) = \mathcal{M}[\tilde{g}^{\gamma}_{\mu}(\cdot , t)](\eta)= \Gamma\left( \frac{\eta -1}{\gamma} + \mu \right) \frac{t^\frac{\eta -1}{\gamma}}{\Gamma(\mu)}, \quad \Re\{ \eta \} > 1-\gamma \mu 
\label{Zyyy}
\end{equation}
where $\tilde{g}^{\gamma}_{\mu}(x, t) = g^{\gamma}_{\mu}(x, t^{1/\gamma})$. For $\gamma>0$, we perform the time derivative of \eqref{Zyyy} and obtain
\begin{align*}
\frac{\partial}{\partial t} \Psi_t(\eta) = &\frac{\eta -1}{\gamma} \Gamma\left( \frac{\eta -1}{\gamma} + \mu \right) t^\frac{\eta -\gamma -1}{\gamma}\\
=  & \frac{\eta -1}{\gamma} \left( \frac{\eta - \gamma -1 + \gamma \mu}{\gamma} \right) \Gamma\left( \frac{\eta - \gamma -1}{\gamma} + \mu \right) t^\frac{\eta - \gamma -1}{\gamma}\\
= & \frac{1}{\gamma^2} (\eta -1) ( \eta - \gamma -1 + \gamma \mu ) \Psi_t(\eta - \gamma)
\end{align*}
which coincides with \eqref{compDer} and $\tilde{u}^{\gamma, \mu}_{1}(x,t) = \tilde{g}^{\gamma}_{\mu}(x, t)$, $\gamma>0$. Similar calculation must be done for $\gamma < 0$ and the proof is completed.
\end{proof}

\begin{coro}
Let us write $\tilde{u}^{\mu}_{\nu}(x,t)=\tilde{u}^{1, \mu}_{\nu}(x,t)$. The distribution $\tilde{u}^\mu_\nu (x,t)$, $x>0$, $t>0$ $\mu>0$, $\nu \in (0,1]$, solves the following fractional equation
\begin{equation}
\frac{\partial^{\nu}}{\partial t^{\nu}}u^{\mu}_{\nu} = \left( x \frac{\partial^2}{\partial x^2} - (\mu -2) \frac{\partial}{\partial x} \right) u^{\mu}_{\nu}. \label{pde:fracG1}
\end{equation}
\end{coro}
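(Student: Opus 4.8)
The plan is to read the statement off Theorem \ref{mainTheorem} specialised to $\gamma=1$, and then to pass from the Riemann--Liouville derivative $D^\nu_t$ that appears there to the Dzerbayshan--Caputo derivative $\partial^\nu/\partial t^\nu$ written in \eqref{pde:fracG1}. First I would set $\gamma=1$ in the operator \eqref{generatorG} and expand its divergence form: writing $\mathcal{G}_{1,\mu}f = \partial_x(x\,\partial_x f) - (\mu-1)\partial_x f$ and carrying out the outer derivative gives $\mathcal{G}_{1,\mu}f = x\,\partial_x^2 f + \partial_x f - (\mu-1)\partial_x f = x\,\partial_x^2 f - (\mu-2)\partial_x f$, which is exactly the spatial operator on the right-hand side of \eqref{pde:fracG1}. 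With this identification, Theorem \ref{mainTheorem} for $\gamma=1$ (that is, equation \eqref{p1}) already yields $D^\nu_t\,\tilde{u}^\mu_\nu = \big(x\,\partial_x^2 - (\mu-2)\partial_x\big)\tilde{u}^\mu_\nu$ for $x>0$, $t>0$, $\nu\in(0,1)$.

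It then remains to replace $D^\nu_t$ by $\partial^\nu/\partial t^\nu$. Here I would invoke \eqref{RCfracder} with $n=1$, applied in the time variable, which reads
\begin{equation*}
D^\nu_t f = \frac{\partial^\nu}{\partial t^\nu} f - f(x,0^+)\,\frac{t^{-\nu}}{\Gamma(1-\nu)}, \qquad 0<\nu<1.
\end{equation*}
Substituting the equation obtained above gives
\begin{equation*}
\frac{\partial^\nu}{\partial t^\nu}\tilde{u}^\mu_\nu = \big(x\,\partial_x^2 - (\mu-2)\partial_x\big)\tilde{u}^\mu_\nu + \tilde{u}^\mu_\nu(x,0^+)\,\frac{t^{-\nu}}{\Gamma(1-\nu)},
\end{equation*}
so that \eqref{pde:fracG1} follows as soon as the initial trace $\tilde{u}^\mu_\nu(x,0^+)$ vanishes for $x>0$.

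The key step is therefore to check that $\tilde{u}^\mu_\nu(x,0^+)=0$ for every $x>0$. Recall $\tilde{u}^\mu_\nu=\tilde{g}^1_\mu\circ l_\nu$, i.e. $\tilde{u}^\mu_\nu(x,t)=\int_0^\infty g^1_\mu(x,s)\,l_\nu(s,t)\,ds$, with $g^1_\mu(x,s)=\frac{x^{\mu-1}}{s^\mu\Gamma(\mu)}e^{-x/s}$. Since $L^{(\nu)}_t\to 0$ as $t\to 0^+$, the law $l_\nu(\cdot,t)$ collapses to $\delta_0$, whence $\tilde{u}^\mu_\nu(x,0^+)=g^1_\mu(x,0^+)$; and for fixed $x>0$ the exponential factor $e^{-x/s}$ forces $g^1_\mu(x,0^+)=\lim_{s\to 0^+}\frac{x^{\mu-1}}{s^\mu\Gamma(\mu)}e^{-x/s}=0$. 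Equivalently, from \eqref{MelFracq} with $\gamma=1$ one has $\mathcal{M}[\tilde{u}^\mu_\nu(\cdot,t)](\eta)=C(\eta)\,t^{(\eta-1)\nu}\to 0$ as $t\to 0^+$ on the part $\Re\{\eta\}>1$ of the fundamental strip, which is the same conclusion at the level of the Mellin transform. Hence the correction term drops out and \eqref{pde:fracG1} holds for $\nu\in(0,1)$; the boundary case $\nu=1$ is Corollary \ref{coro1} with $\gamma=1$, so the whole range $\nu\in(0,1]$ is covered.

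The main obstacle I anticipate is precisely this last passage from $D^\nu_t$ to $\partial^\nu/\partial t^\nu$. Although the computation is short, it is the only place where the initial data enter, and one must be careful that the vanishing of $\tilde{u}^\mu_\nu(x,0^+)$ is used pointwise in $x>0$ (where the governing equation is asserted) rather than in the distributional sense, in which $\tilde{u}^\mu_\nu(\cdot,0^+)$ is the point mass $\delta(x)$. Keeping the statement on the open half-line $x>0$ is exactly what lets the $t^{-\nu}/\Gamma(1-\nu)$ term disappear.
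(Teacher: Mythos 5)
Your argument is correct and is essentially the proof the paper intends: the corollary is stated without an explicit proof, only the remark that one should ``specialize the previous result by keeping in mind formula \eqref{RCfracder} and the operator \eqref{generatorG}'', which is exactly your specialization $\gamma=1$ of Theorem \ref{mainTheorem} together with the Riemann--Liouville/Dzerbayshan--Caputo relation. Your explicit check that the initial trace $\tilde{u}^{\mu}_{\nu}(x,0^{+})$ vanishes pointwise for $x>0$ (so the $t^{-\nu}/\Gamma(1-\nu)$ correction drops) is a detail the paper leaves implicit, and you supply it correctly.
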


In particular, for $\nu = 1/2$, we have
\begin{equation*}
\tilde{u}^{\mu}_{1/2}(x,t) = \frac{x^{\mu -1}}{\sqrt{\pi t} \Gamma(\mu)} \int_0^\infty s^{-\mu} \exp\left\lbrace -\frac{x}{s} - \frac{s^2}{4t} \right\rbrace ds, \quad x>0,\, t>0,\; \mu>0
\end{equation*}
which can be seen as the distribution of the process $\{G^{1,\mu}_{|B(2t)|}$,\; $t>0\}$ where $B$ is a standard Brownian motion run at twice its usual speed and $G^{\gamma, \mu}_t$ is a GGP. We notice that the process $G^{1,\mu}_t$ is a squared Bessel process starting from zero.

\begin{coro}
The distribution $\tilde{u}^{2,\mu}_{\nu} = \tilde{u}^{2,\mu}_{\nu}(x,t)$, $x>0$, $t>0$, $\mu>0$, $\nu \in (0,1]$ solves the following fractional equation
\begin{equation*}
\frac{\partial^\nu}{\partial t^{\nu}} \tilde{u}^{2,\mu}_{\nu} = \frac{1}{2^2} \left( \frac{\partial^2}{\partial x^2} - \frac{\partial}{\partial x} \frac{(2\mu -1)}{x} \right) \tilde{u}^{2,\mu}_{\nu}.
\end{equation*}
\end{coro}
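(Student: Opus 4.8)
The plan is to derive this corollary as a direct specialization of Theorem \ref{mainTheorem} to the case $\gamma = 2$, in exactly the same manner as the companion $\gamma = 1$ statement. First I would set $\gamma = 2$ in the fractional equation \eqref{ProbFrac}: Theorem \ref{mainTheorem} guarantees that $\tilde{u}^{2,\mu}_\nu$ solves $D^\nu_t \tilde{u}^{2,\mu}_\nu = \mathcal{G}_{2,\mu}\tilde{u}^{2,\mu}_\nu$, with the Fox-function representation \eqref{FoxDifa} valid over the whole range $\nu \in (0,1]$ and the explicit Bessel form \eqref{solFracq} covering the subcase $\nu = 1/(2n+1)$. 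It then remains only to rewrite the operator $\mathcal{G}_{2,\mu}$ of \eqref{generatorG} in the asserted form, and to reconcile the two notions of fractional time derivative.

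For the operator I would substitute $\gamma = 2$ directly into \eqref{generatorG}. The two exponents collapse to $2 - \gamma = 0$ and $1 - \gamma = -1$, the coefficient becomes $\gamma\mu - 1 = 2\mu - 1$, and the prefactor is $1/\gamma^2 = 1/2^2$, so that
\begin{equation*}
\mathcal{G}_{2,\mu} f = \frac{1}{2^2}\left( \frac{\partial}{\partial x} x^{0} \frac{\partial}{\partial x} - (2\mu - 1)\frac{\partial}{\partial x} x^{-1}\right) f = \frac{1}{2^2}\left( \frac{\partial^2}{\partial x^2} - \frac{\partial}{\partial x}\frac{(2\mu - 1)}{x}\right) f,
\end{equation*}
which is precisely the right-hand side of the claimed equation. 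This is a one-line verification and carries no genuine difficulty.

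The only point that really requires care is the passage from the Riemann-Liouville derivative $D^\nu_t$ of \eqref{ProbFrac} to the Dzerbayshan-Caputo derivative $\partial^\nu/\partial t^\nu$ written in the statement. Here I would invoke \eqref{RCfracder} with $n = 1$, which for $0 < \nu < 1$ yields $D^\nu_t \tilde{u}^{2,\mu}_\nu = (\partial^\nu/\partial t^\nu)\tilde{u}^{2,\mu}_\nu - \tilde{u}^{2,\mu}_\nu(x,0^+)\, t^{-\nu}/\Gamma(1-\nu)$. The two derivatives therefore coincide on $\tilde{u}^{2,\mu}_\nu$ exactly when the boundary term vanishes, i.e. when $\tilde{u}^{2,\mu}_\nu(x,0^+) = 0$ for every $x > 0$. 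I expect this to be the main, and essentially the only, obstacle. I would settle it through the probabilistic reading $\tilde{u}^{2,\mu}_\nu = \tilde{g}^2_\mu \circ l_\nu$: as $t \to 0^+$ the inverse process $L^{(\nu)}_t$ degenerates to $0$ and the subordinated density concentrates at the origin, so it vanishes pointwise for $x > 0$; the same conclusion can be read off the small-$t$ behaviour of the Mellin transform \eqref{MelFracq} taken at $\gamma = 2$. Finally, the endpoint $\nu = 1$ is not covered by the theorem's proof but follows from Corollary \ref{coro1} with $\gamma = 2$, where $\partial^1/\partial t^1$ is the ordinary derivative, thus completing the full range $\nu \in (0,1]$.
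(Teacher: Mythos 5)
Your proposal is correct and follows exactly the route the paper intends: the corollary is stated as a direct specialization of Theorem \ref{mainTheorem} at $\gamma=2$, with the operator read off from \eqref{generatorG} and the two fractional derivatives reconciled via \eqref{RCfracder}, which is precisely what you do. Your extra care in checking that $\tilde{u}^{2,\mu}_{\nu}(x,0^{+})=0$ for $x>0$ (so the boundary term in \eqref{RCfracder} drops out) is a detail the paper leaves implicit, and it is argued correctly.
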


In particular, for $\nu=1/3$, we have
\begin{equation*}
\tilde{u}^{2, \mu}_{1/3}(x,t) = \frac{2\, x^{2\mu -1}}{\pi \Gamma(\mu) \sqrt{t}} \int_0^\infty \frac{e^{-\frac{x^2}{s}}}{s^{\mu -1/2}} K_\frac{1}{3}\left(\frac{2}{3^{3/2}} \frac{s^{3/2}}{\sqrt{t}} \right) ds, \quad x>0, \; t>0,\; \mu>0
\end{equation*}
and for $\mu=1/2$ we obtain 
\begin{equation*}
\tilde{u}^{2, 1/2}_{1/3}(x,t) = \frac{2}{\pi^{3/2} \sqrt{t}} \int_0^\infty e^{-\frac{x^2}{s}} K_\frac{1}{3}\left(\frac{2}{3^{3/2}} \frac{s^{3/2}}{\sqrt{t}} \right) ds, \quad x>0, \; t>0
\end{equation*}
which is the distribution of $|B(L^{1/3}_t)|$ where $|B(t)|$ is a folded Brownian motion with variance $t/2$.\\

\textbf{Acknowledgement} The author is grateful to the anonymous referee for careful checks and comments.

\bibliographystyle{abbrvnat}
\bibliography{biblio} 

\end{document}